\documentclass[reqno,oneside,11pt]{amsart}
\usepackage[T1]{fontenc}
\usepackage[utf8]{inputenc}
\usepackage{lmodern}
\usepackage[french,english]{babel}
\usepackage{geometry} 

\usepackage{amsmath,amssymb,amsfonts,amsthm,mathtools}
\usepackage{mathrsfs}
\usepackage{graphicx}
\usepackage{caption}
\usepackage{subcaption} 
\usepackage{booktabs}
\usepackage{array}
\usepackage{paralist}
\usepackage{fancyhdr}
\usepackage{emptypage}
\fancypagestyle{plain}{
	\fancyhf{}
	\fancyfoot[RO,RE]{\thepage}
	
	}

\usepackage{comment} 
\usepackage{diagbox}
\usepackage{xcolor}
\usepackage{epigraph}

\usepackage[noadjust]{cite}
\usepackage[
pagebackref=true,
colorlinks=true,
urlcolor=purple,
linkcolor=purple!87!black,
citecolor=green!60!black,
pdfborder={0 0 0}
]{hyperref}
\renewcommand*{\backref}[1]{}
\renewcommand*{\backrefalt}[4]{[{\tiny%
		\ifcase #1 Not cited.%
		\or Cited on page~#2.%
		\else Cited on pages #2.%
		\fi%
	}]}
\usepackage{cleveref}

\usepackage{bookmark}
\usepackage{dsfont}

\newcommand{\Z}{\mathbb{Z}}

\newcommand{\R}{\mathbb{R}}
\renewcommand{\P}{\mathbb{P}}

\newcommand{\supp}[1]{\mathrm{supp}(#1)}

\makeatletter
\newcommand{\neutralize}[1]{\expandafter\let\csname c@#1\endcsname\count@}
\makeatother

\newcommand{\thistheoremname}{}

\newtheorem*{genericthm*}{\thistheoremname}
\newenvironment{namedthm*}[1]
{\renewcommand{\thistheoremname}{#1}%
	\begin{genericthm*}}
	{\end{genericthm*}}
\theoremstyle{plain}
\newtheorem{thm}{Theorem}[section] 
\newtheorem{prop}[thm]{Proposition}
\newtheorem{lem}[thm]{Lemma}

\newtheorem{question}[thm]{Question}
\theoremstyle{definition}
\newtheorem{rem}[thm]{Remark}
\newtheorem{defn}[thm]{Definition} 
\newtheorem{exmp}[thm]{Example} 
\allowdisplaybreaks
\author{Eduardo Silva} 
\email[Eduardo Silva]{eduardo.silva@uni-muenster.de, edosilvamuller@gmail.com}
\address{University of Münster, Einsteinstrasse 62, Münster 48149, Germany}
\urladdr{\url{https://edoasd.github.io/eduardo_silva_math/}}
\title[Continuity of asymptotic entropy on free solvable groups]{Continuity of asymptotic entropy on free solvable groups}
\date{\today}

\newcommand{\pesc}[1]{p_{\mathrm{esc}}(#1)}

\begin{document}
	\begin{abstract} 
		We prove the continuity of asymptotic entropy, as a function of the step distribution, among non-degenerate probability measures with finite Shannon entropy on the free solvable group $S_{d,m}$ of rank $d\ge 3$ and derived length $m\ge 2$.
	\end{abstract}
	\maketitle
\section{Introduction}\label{section: introduction}

One of the earliest results relating geometric properties of non-abelian infinite groups to random walks is due to H.~Kesten \cite{Kesten1959}, who showed that a group is non-amenable if and only if the return probability to the origin at time $n$ of a symmetric random walk on the group decays exponentially as $n\to\infty$. Later, J. Rosenblatt \cite[Theorem 1.10]{Rosenblatt1981} and independently V. Kaimanovich and A. Vershik \cite[Theorem 4.4]{KaimanovcihVershik1983} proved that a group is amenable if and only if there exists a probability measure $\mu$ on the group with the Liouville property, meaning that every bounded $\mu$-harmonic function is constant. Recall that a function $f:G\to\mathbb{R}$ is called \emph{$\mu$-harmonic} if $ f(g)=\sum_{h\in G} f(gh)\mu(h)$ for all $g\in G$. V. Kaimanovich and A. Vershik also exhibited the first examples of amenable groups that admit symmetric finitely supported probability measures that do not possess the Liouville property \cite[Proposition 6.4]{KaimanovcihVershik1983}. Their examples are the wreath products $\mathbb{Z}/2\mathbb{Z}\wr \mathbb{Z}^d$ for $d\ge 3$, which are not only amenable but even $2$-step solvable.

Within the class of amenable groups, solvable groups provide a particularly rich source of examples exhibiting diverse behavior for random walks. Most notably, J. Brieussel and T. Zheng showed that any sufficiently regular function can be realized as the asymptotic drift and asymptotic entropy (see Definition \ref{defn: asymptotic entropy} below) of a random walk on a $4$-step solvable group \cite[Theorem 1.1]{BrieusselZheng2021}. Their result generalizes earlier results in this direction by \cite[Theorem 1]{Erschlerasymptotics}, \cite[Theorems 1 \& 2]{Erschlerdrift}, \cite[Theorem 1.1]{KotowskiVirag2015}, \cite[Theorems 1 \& 3]{AmirVirag2017}, and \cite[Theorem 1]{Amir2017}, which are also based on solvable groups.

Among solvable groups, a natural class to study is given by free solvable groups, which may be viewed as the solvable groups of a given rank and derived length with no relations other than those imposed by solvability of that derived length.
\begin{defn}\label{def: free solvable}
	Denote by $F_d$ the free group of rank $d\ge 2$. The \emph{free solvable group of rank $d$ and derived length $m\ge 2$} is defined as the quotient
	\[
	S_{d,m}\coloneqq F_d/F_d^{(m)},
	\]
	where $F_d^{(1)}=[F_d,F_d]$ and $F_d^{(r)}=\left[F_d^{(r-1)},F_d^{(r-1)}\right]$ for $r=2,\ldots,m$.
	
\end{defn}

Free solvable groups satisfy the following universal property: every finitely generated group with $d$ generators that is solvable of derived length $m$ is a quotient of $S_{d,m}$.  Many structural properties of free solvable groups are closely related to those of wreath products via the classical Magnus embedding theorem \cite{Magnus1939}. This theorem provides an explicit monomorphism
\(
S_{d,m}\hookrightarrow \mathbb{Z}^d\wr S_{d,m-1}\coloneqq \bigoplus_{S_{d,m-1}} \Z^d\rtimes S_{d,m-1}.
\)
Free solvable groups have been studied extensively in the past decades for their algebraic \cite{Bachmut1965,Matthews1966,Baumslag1973,BachmuthMochizuki1982,BachmuthMochizuki1985,Tolstykh2011,Sale2015geometry,Alibabaei2017} algorithmic \cite{RemeslennikovSokolov1970,Baumslag2010,MyasnikovRomankovUshakovVershik2010,Romankov2012,Ushajov2014,Funda2017,Benzezra2019,Benzezra2020} and geometric \cite{Cornulier2011,Vassileva2012,Sale2015,Jacoboni2019,Wang2024} properties.

The study of random walks on free solvable groups goes back to the work of A. Vershik \cite[Section 4]{Vershik2000}, where a geometric interpretation of free solvable groups is introduced (see also \cite{VershikDobrynin2005}) and used to analyze the Poisson boundary of random walks on these groups. The Poisson boundary of free solvable groups has since been studied further: under suitable conditions on the step distribution, descriptions of the Poisson boundary are obtained in \cite[Theorem 2]{Erschler2011}, \cite[Section 6]{LyonsPeres2021}, and \cite[Corollary 1.8]{FrischSilva2024}. In particular, these works provide a description of the space of bounded $\mu$-harmonic functions on $S_{d,m}$ for any non-degenerate probability measure with a finite first moment. Other aspects of random walks on free solvable groups have also been explored. For instance, the asymptotic decay of the return probability to the origin in $S_{d,m}$ is computed in \cite[Theorem 1.1]{SaloffCosteZheng2015}.

The central object of the present paper is the asymptotic entropy of probability measures on free solvable groups. Recall that the \emph{Shannon entropy} of a probability measure $\mu$ on $G$ is defined as \(H(\mu)\coloneqq -\sum_{g\in G}\mu(g)\log(\mu(g)).\)

\begin{defn}\label{defn: asymptotic entropy}
	The \emph{asymptotic entropy} of a probability measure $\mu$ on a countable group $G$ is defined as
	\[
	h(\mu)\coloneqq\lim_{n\to \infty} \frac{H(\mu^{*n})}{n}\in [0,\infty].
	\]
\end{defn}

Asymptotic entropy was introduced by A. Avez \cite{Avez1972}, who showed that a finitely supported probability measure $\mu$ with $h(\mu)=0$ has the Liouville property. Furthermore, for any probability measure with $H(\mu)<\infty$, the \emph{entropy criterion} of Y. Derriennic \cite{Derrienic1980} and V. Kaimanovich and A. Vershik \cite{KaimanovcihVershik1983} states that $h(\mu)=0$ if and only if $\mu$ has the Liouville property.

Our main result establishes the continuity of asymptotic entropy, as a function of the step distribution, on free solvable groups of rank at least $3$.

\begin{thm}\label{thm: free solvable}
	Let $S_{d,m}$ be the free solvable group of rank $d\ge 3$ and derived length $m\ge 2$. Let $\mu$ be a non-degenerate probability measure on $S_{d,m}$ with $H(\mu)<\infty$. Let $\{\mu_k\}_{k\ge 1}$ be a sequence of non-degenerate probability measures on $S_{d,m}$ such that 	$\lim_{k\to \infty}\mu_k(g)=\mu(g)$ for every $g\in S_{d,m}$ and $\lim_{k\to \infty} H(\mu_k)=H(\mu)$. Then, we have $\lim_{k\to \infty} h(\mu_k)=h(\mu)$.
\end{thm}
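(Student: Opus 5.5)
Upper semicontinuity of $\mu\mapsto h(\mu)$ comes for free, so the whole task is lower semicontinuity. Since $h(\mu)=\inf_n H(\mu^{*n})/n$ and, for each fixed $n$, $H(\mu_k^{*n})\to H(\mu^{*n})$ under pointwise convergence together with $H(\mu_k)\to H(\mu)$, we get $\limsup_k h(\mu_k)\le h(\mu)$. For the reverse inequality I would use the Kaimanovich--Vershik/Derriennic identification of $h(\mu)$ with the Furstenberg (Avez) entropy of the Poisson boundary. This identification holds because $H(\mu)<\infty$. The plan is then to exhibit, uniformly in $k$, a $\mu_k$-boundary (a $\mu_k$-stationary quotient of the path space) whose Furstenberg entropy is lower semicontinuous in the step distribution. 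The candidate boundary is the one from \cite{Erschler2011,LyonsPeres2021,FrischSilva2024}. Via the Magnus embedding $S_{d,m}\hookrightarrow \mathbb{Z}^d\wr S_{d,m-1}$, it is given by the pointwise limit of the lamp configuration, i.e.\ of the $\bigoplus_{S_{d,m-1}}\mathbb{Z}^d$-coordinate.

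The key input is transience of the projected walk on $S_{d,m-1}$ (the base of the wreath product), together with pointwise stabilisation of the lamp configuration. For $m\ge 3$ the base $S_{d,m-1}$ has exponential growth. For $m=2$ the base is $\mathbb{Z}^d$ with $d\ge 3$. In both cases every non-degenerate walk on the base is transient, by Varopoulos' theorem. Note that this requires no moment assumption, and the hypothesis $d\ge 3$ is exactly what rules out the recurrent case $\mathbb{Z}^2$ at $m=2$. Transience makes each base point visited finitely often, so the lamp at each site stabilises. This should hold even without a first moment, since only finitely many increments touch any given site, although I must check this carefully: with infinite first moment the lamp modifications at a site are not local. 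The Magnus cocycle is a sum of Fox-derivative contributions along the path, so I would verify stabilisation through the $\ell^1$ structure of Fox derivatives of single increments.

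The conclusion then follows by a standard scheme in the spirit of Kaimanovich's continuity results for boundaries with "stabilising" coordinates (e.g.\ lamplighters over transient bases). Write $h(\mu)=\lim_{N} \big(H(\mu^{*N})-H(\mu^{*N}\mid \text{limit configuration})\big)/N$, or better as the lower bound
\[
h(\mu)\ \ge\ \tfrac1N\big(H(\mu^{*N})-H(\mu^{*N}\mid \xi_\infty)\big)\quad\text{for each }N,
\]
which is attained in the limit. For the lower semicontinuity it suffices to show that the conditional entropy $H(X_N\mid \xi_\infty)$, or a finite-window approximation of it, is upper semicontinuous in $\mu_k$. Concretely, I would approximate $\xi_\infty$ by its restriction to a large finite ball $B_R$ of $S_{d,m-1}$. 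The mutual information $I(X_N;\xi_\infty|_{B_R})$ is continuous in $k$, since it is determined by finitely many steps up to the error coming from returns to $B_R$ after time $T$. That error is controlled by the probability of returning to $B_R$ after time $T$, uniformly in $k$.

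The main obstacle is this uniform transience estimate. I need $\sup_k \mathbb{P}_{\mu_k}(\text{projected walk visits } B_R \text{ after time } T)\to 0$ as $T\to\infty$, plus uniform control on how much of the configuration in $B_R$ is determined by increments after time $T$ (here finite entropy and $H(\mu_k)\to H(\mu)$ provide uniform integrability). My first attempt would be uniform heat-kernel upper bounds: convergence of $\mu_k$ gives, for large $k$, a uniform lower bound $\mu_k\ge c\,\nu$ for a fixed symmetric finitely supported non-degenerate $\nu$. Comparison (Coulhon--Saloff-Coste/Varopoulos, via Dirichlet forms applied to a lazy symmetrisation) should then give $\mu_k^{*n}(B_R)\le C_R n^{-3/2}$ uniformly. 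This is summable, and summing over $n\ge T$ yields the desired uniform escape. Symmetrisation is delicate for non-symmetric $\mu_k$, so I would work with return probabilities of $\check\mu_k*\mu_k$, or use the Green function comparison directly.
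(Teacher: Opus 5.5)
\textbf{Gap: the argument needs a boundary identification that is not available.} Your reduction hinges on the identity $h(\mu)=I_\mu(w_1;\xi_\infty)$. That is, the limit lamp configuration in the top layer of $\mathbb{Z}^d\wr S_{d,m-1}$ must be the \emph{full} Poisson boundary of $(S_{d,m},\mu)$. This is not available under the hypotheses of the theorem.

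For any $\mu_k$-boundary $\xi$ one does have $h(\mu_k)\ge I_{\mu_k}(w_1;\xi)$. Your finite-window argument could then at best give $\liminf_k h(\mu_k)\ge I_\mu(w_1;\xi_\infty)$, using lower semicontinuity of mutual information once the joint laws of $(w_N,\xi_\infty|_{B_R})$ converge. This equals $h(\mu)$ only if $\xi_\infty$ is maximal, which is exactly what your phrase ``attained in the limit'' assumes. The identifications in \cite{Erschler2011,LyonsPeres2021,FrischSilva2024} are proved under moment conditions, at best a finite first moment. For measures with merely $H(\mu)<\infty$ no identification of the Poisson boundary of $S_{d,m}$ is known, as the introduction of the paper stresses.

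Worse, $\xi_\infty$ need not exist at all: without moments, infinitely many increments can touch a given site. For $m=2$, mix the simple random walk on $x_1^{\pm1},\ldots,x_d^{\pm1}$ with, for each $j$, mass $\asymp 4^{-j}$ on a closed loop of scale $r=2^j$ whose Magnus flow is non-zero at every vertex of the ball of radius $r$ around its base point. The entropy stays finite, and the projected walk on $\mathbb{Z}^d$ is only slowed down. That walk spends time $\asymp r^2$ in a ball of radius $r$ with probability bounded below, so the lamp at the origin is changed at infinitely many scales with positive probability. This $\mu$ satisfies the hypotheses of the theorem. So no argument through stabilisation of lamps can prove the theorem in this generality, and the Fox-derivative bookkeeping you mention cannot rescue it.

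\textbf{How the paper avoids this.} The paper never uses the Poisson boundary. It applies the Magnus embedding repeatedly to view $S_{d,m}$ inside the $m$-fold iterated wreath product of copies of $\mathbb{Z}^d$. By induction on the layers, Proposition~\ref{prop: iterated wreath main prop} combines Lemma~\ref{lem: starting point for iterated wreath products} at each layer with Lemma~\ref{lem: coarse trajectory has small entropy} on the abelian bottom layer, where $h=0$. This gives, uniformly in $k$,
\[
H_{\mu_k}\big(\mathcal{P}_n^{N_1\cdots N_{m-1}t_0}\mid w_n\big)\le\varepsilon n+(H(\mu)+1)\textstyle\sum_j nn_0/N_j+C.
\]
Subadditivity together with $H(\mu_k^{*t})\to H(\mu^{*t})$ then gives $\liminf_k h(\mu_k)\ge h(\mu)$ directly. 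In effect this is a uniform-in-$k$ entropy criterion that needs neither stabilisation nor maximality of any model boundary.

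The uniform transience you isolated as the main obstacle enters only through Theorem~\ref{thm: continuity of range}, continuity of $\pesc{\cdot}$ on groups of at least cubic growth, applied to each projection $\pi_{j*}\mu_k$.

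\textbf{A smaller flaw.} Your comparison step is also wrong as stated. Non-degeneracy of $\mu$ does not give $\mu_k\ge c\nu$ with $\nu$ symmetric, since $\supp{\mu}$ need not contain a symmetric generating set.
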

The assumption $\lim_{k\to\infty} H(\mu_k)=H(\mu)$ is natural in this setting, since asymptotic entropy is known to be upper semicontinuous under pointwise convergence of probability measures (see Proposition \ref{prop: upper semicontinuous entropy}), and the convergence	of Shannon entropies is used to control the entropy of finite convolution powers. 

The question of whether the map $\mu\mapsto h(\mu)$ is continuous was raised in \cite[Section 6, Item 5]{Erschler2004Liouville}. Positive results are known for several classes of groups, including hyperbolic groups \cite[Theorem 2]{ErschlerKaimanovich2013}, \cite[Theorem 2.9]{GouezelSebastienMatheusMacourant2018}, acylindrically hyperbolic groups \cite[Theorem F]{Choi2024}, linear groups and groups acting properly on $\mathrm{CAT}(0)$ cube complexes \cite[Corollary 1.6]{Silva2025}, as well as wreath products of the form $A\wr \Z^d$ with $d\ge 3$ \cite[Theorem 1.2]{Silva2025}. Among these examples, the latter is the only class that includes amenable groups. On the other hand, discontinuity phenomena are known to occur, and all currently known examples arise among amenable groups. The first such examples were constructed by V. Kaimanovich \cite[Theorems 3.1 \& 4.1]{Kaimanovich1983nontrivial}, who exhibited locally finite groups admitting probability measures with positive asymptotic entropy that cannot be approximated by the asymptotic entropy of finitely supported measures. Further examples, including discontinuity for measures supported on a fixed finite generating set, were later obtained by A. Erschler using the wreath product $\Z/2\Z\wr D_{\infty}$ \cite[Lemma 4]{Erschler2011}. We refer the reader to the introduction of \cite{Silva2025} for a more detailed discussion.

The starting point of the proof of Theorem \ref{thm: free solvable} is \cite[Theorem 7.1]{Silva2025} (see also Theorem \ref{thm: continuity asymptotic entropy wreath prods} below), which provides technical conditions under which asymptotic entropy is continuous on a wreath product. The main new difficulty in the present work is extending this argument to iterated wreath products in Theorem \ref{thm: iterated wreath products}, which requires controlling the entropy contribution coming from several layers of the wreath product structure. This is afterwards combined with Magnus' embedding theorem \cite{Magnus1939} (see also Theorem \ref{thm: original formulation Magnus embedding}) to obtain Theorem \ref{thm: free solvable}.

We remark that Theorem \ref{thm: free solvable} does not cover free solvable groups of rank $2$. The reason is that the statement is not expected to hold in full generality for these groups. Indeed, examples of discontinuity of asymptotic entropy can already be found for wreath products of the form $\Z/2\Z\wr \Z^2$. To obtain continuity results in this setting, one needs to restrict the class of probability measures under consideration. In Theorems \ref{thm: wreath prods 12} and \ref{thm: rank2 free solvable}, we prove the continuity of asymptotic entropy on wreath products $A\wr \Z^d$ for $d\in\{1,2\}$, and on free solvable groups of rank $2$, among probability measures supported on a fixed finite subset of the group.

All known examples of discontinuity of asymptotic entropy mentioned so far consist of sequences of probability measures $\{\mu_k\}_{k\ge 1}$ such that $h(\mu_k)=0$ for all $k\ge 1$, while the limit measure $\mu$ satisfies $h(\mu)>0$. However, discontinuity is not restricted to this situation. A simple modification of the above examples, obtained by taking a direct product with a free group, yields examples in which all asymptotic entropies involved are strictly positive. More precisely, in Proposition \ref{prop: discontinuity of asymptotic entropy with positive values} we construct groups $G$ and probability measures $\mu$, $\{\mu_k\}_{k\ge 1}$ on $G$, supported on a fixed finite subset of $G$, such that $h(\mu_k)>0$ for all $k\ge 1$ and $h(\mu)>0$, while $\lim_{k\to\infty}\mu_k(g)=\mu(g)$ for every $g\in G$ but $\lim_{k\to\infty}h(\mu_k)\neq h(\mu)$.

Finally, we remark that certain special cases of Theorem~\ref{thm: free solvable}, under additional assumptions on the measures $\mu$ and $\mu_k$, $k \ge 1$, can be deduced from \cite[Theorem 1.5]{Silva2025}. Indeed, together with the identification of the Poisson boundary obtained in \cite[Corollary 1.8]{FrischSilva2024}, the continuity of asymptotic entropy would follow, among probability measures for which this identification holds, from the weak continuity of the associated harmonic measures on this model of the Poisson boundary; see \cite[Proposition 8.12 and Corollary 8.13]{Silva2025} for the analogous statements in the case of wreath products. However, this argument does not yield Theorem \ref{thm: free solvable} in full generality for arbitrary sequences of probability measures with finite entropy. At this level of generality, there is currently no identification of the Poisson boundary of free solvable groups, and in particular none that simultaneously realizes the Poisson boundaries corresponding to different probability measures within a common Polish model space.

\subsection{Organization}
In Section \ref{section: preliminaries} we recall the main technical results that we use regarding convergence of probability measures. In Section \ref{section: free solvable groups} we prove Theorem \ref{thm: iterated wreath products}, which is the main technical result of this paper, and use it to prove Theorem \ref{thm: free solvable}. Finally, in Section \ref{section: linear quadratic} we discuss continuity and discontinuity results for asymptotic entropy on wreath products $A\wr \Z^d$ with $d\in \{1,2\}$ and on free solvable groups of rank $2$.

\subsection{Acknowledgements}
The author is funded by the Deutsche Forschungsgemeinschaft (DFG, German Research Foundation) under Germany's Excellence Strategy EXC 2044/2 –390685587, Mathematics Münster: Dynamics–Geometry–Structure.
\section{Preliminaries}\label{section: preliminaries}
\subsection{Random walks on groups and entropy}\label{subsection: entropy and Poisson boundary}
Let $\mu$ be a probability measure on a countable group $G$. The \emph{$\mu$-random walk} on $G$ is the Markov chain $\{w_n\}_{n\ge 0}$ with state space $G$, starting at $w_0=e_G$ and with transition probabilities $p(g,h)=\mu(g^{-1}h)$ for $g,h\in G$. Equivalently, for each $n\ge 1$ we have
$$
w_n=g_1g_2\cdots g_n,
$$
where $\{g_i\}_{i\ge 1}$ is a sequence of independent, identically distributed, random variables on $G$ with law $\mu$. The law $\P_{\mu}$ of the chain $\{w_n\}_{n\ge 0}$ is the push-forward of the Bernoulli measure $\mu^{\Z_{+}}$ through the map
\begin{equation*}
	\begin{aligned}
		G^{\mathbb{Z}_{\ge 1}}&\to G^{\Z_{+}}\\
		(g_1,g_2,g_3,\ldots)&\mapsto (w_0,w_1,w_2,w_3,\ldots)\coloneqq (e_G,g_1,g_1g_2,g_1g_2g_3,\ldots).
	\end{aligned}
\end{equation*}
The space $(G^{\Z_{+}},\P_{\mu})$ is called the \emph{space of sample paths} or the \emph{space of trajectories} of the $\mu$-random walk.

In this paper we work with countable partitions of the space of sample paths. The \emph{Shannon entropy} of a countable partition $\rho$ of the space of sample paths $G^{\Z_{+}}$ with respect to the probability measure $\P$ is defined as
\begin{equation*}
	H(\rho)\coloneqq -\sum_{k\ge 1}\P(\rho_k)\log \P(\rho_k).
\end{equation*}

In this paper we will use the following well-known properties of entropy. We refer to \cite[Corollaries 2.5 and 2.6]{MartinEngland1981} and \cite[Section 5]{Rohlin1967} for their proofs.
\begin{lem}\label{lem: basic properties entropy} Consider countable partitions $\rho,\gamma$ and $\delta$ of a Borel space. The following properties hold:
	\begin{enumerate}
		\item \label{item: entropy 1} $	H(\rho\vee \gamma \mid \delta)= H(\rho\mid \gamma\vee \delta)+H(\gamma\mid\delta).$
		\item \label{item: entropy 2} $H(\rho\mid \gamma)\le H(\rho\vee \delta\mid \gamma).$
		\item \label{item: entropy 3} $H(\rho\mid \gamma \vee \delta)\le H(\rho\mid \gamma).$
	\end{enumerate}
\end{lem}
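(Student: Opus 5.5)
The statement is the standard list of basic properties of conditional Shannon entropy of countable partitions, so I would reduce everything to the definition
\[
H(\rho\mid\gamma)=-\sum_{C\in\gamma}\sum_{A\in\rho}\P(A\cap C)\log\frac{\P(A\cap C)}{\P(C)},
\]
with the convention $0\log 0=0$. I would work with possibly infinite entropies by allowing values in $[0,\infty]$. All sums have nonnegative terms, so they may be rearranged freely. For each property I would first check the identity or inequality termwise on atoms and then sum.

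For item (\ref{item: entropy 1}), I would take an atom $A\cap B\cap C$ of $\rho\vee\gamma\vee\delta$, with $A\in\rho$, $B\in\gamma$, $C\in\delta$. On such an atom I would write
\[
\log\frac{\P(A\cap B\cap C)}{\P(C)}=\log\frac{\P(A\cap B\cap C)}{\P(B\cap C)}+\log\frac{\P(B\cap C)}{\P(C)}.
\]
I would then multiply by $-\P(A\cap B\cap C)$ and sum over $A,B,C$. The first term gives $H(\rho\mid\gamma\vee\delta)$. In the second term, summing over $A$ first yields $-\sum_{B,C}\P(B\cap C)\log\frac{\P(B\cap C)}{\P(C)}=H(\gamma\mid\delta)$. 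The possible issue with $\infty-\infty$ does not arise, because all three quantities are sums of nonnegative terms.

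Item (\ref{item: entropy 2}) then follows from (\ref{item: entropy 1}) applied with the roles $\rho\vee\delta=\delta\vee\rho$ and conditioning $\gamma$:
\[
H(\rho\vee\delta\mid\gamma)=H(\delta\mid\rho\vee\gamma)+H(\rho\mid\gamma)\ge H(\rho\mid\gamma).
\]
This holds since conditional entropy is nonnegative, being a sum of terms $-p\log q$ with $p\le q\le 1$.

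Item (\ref{item: entropy 3}) is the only step with real content, namely that conditioning reduces entropy. I would fix atoms $A\in\rho$ and $B\in\gamma$ with $\P(B)>0$, and apply concavity of $\phi(x)=-x\log x$ on $[0,1]$ via Jensen's inequality. The weights are $\P(B\cap C)/\P(B)$ over $C\in\delta$, and the points are $x_C=\P(A\mid B\cap C)$. This gives
\[
\sum_{C}\frac{\P(B\cap C)}{\P(B)}\,\phi\bigl(\P(A\mid B\cap C)\bigr)\le\phi\bigl(\P(A\mid B)\bigr).
\]
Multiplying by $\P(B)$ and summing over $A$ and $B$ yields $H(\rho\mid\gamma\vee\delta)\le H(\rho\mid\gamma)$.

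The main obstacle is justifying Jensen's inequality for countably many weights when sums may be infinite. I would handle it by applying Jensen to finite truncations of $\delta$, with the remainder lumped into a single atom. Lumping the remainder can only decrease the left side, by the finite case of the same concavity. I would then pass to the limit using monotone convergence.
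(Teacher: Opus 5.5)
Your proof is correct. Note that the paper gives no proof of this lemma; it only cites Martin--England (Corollaries 2.5 and 2.6) and Rohlin (Section 5). What you wrote is the standard self-contained argument behind such statements:
\begin{itemize}
\item the chain rule, by splitting the logarithm on atoms of $\rho\vee\gamma\vee\delta$;
\item monotonicity, from the chain rule plus nonnegativity;
\item ``conditioning decreases entropy'', from concavity of $\phi(x)=-x\log x$.
\end{itemize}
A common alternative phrasing of the last step is measure-theoretic. One writes $H(\rho\mid\gamma)=\sum_{A}\int\phi\bigl(\P(A\mid\gamma)\bigr)\,d\P$ and uses the tower property together with conditional Jensen, $\mathbb{E}\bigl[\phi(\P(A\mid\gamma\vee\delta))\mid\gamma\bigr]\le\phi\bigl(\P(A\mid\gamma)\bigr)$. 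This handles countable partitions, and even general $\sigma$-algebras, in one stroke. Your atom-by-atom version is more elementary and entirely adequate for countable partitions.

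One sentence in your truncation step needs correcting. As written, ``lumping the remainder can only decrease the left side, by the finite case of the same concavity'' has the direction reversed. Merging $C_{n+1},C_{n+2},\ldots$ into a single atom $R_n$ can only \emph{increase} $\sum_C w_C\phi(x_C)$, by concavity. Moreover, justifying even that for an infinite tail would use the countable Jensen inequality you are trying to prove. What you actually need is only $\phi\ge 0$:
\[
\sum_{i\le n} w_{C_i}\phi(x_{C_i})\le \sum_{i\le n} w_{C_i}\phi(x_{C_i})+w_{R_n}\phi(x_{R_n})\le \phi\bigl(\P(A\mid B)\bigr),
\]
where the second inequality is finite Jensen. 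Then let $n\to\infty$ by monotone convergence.

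In fact this step is not a real obstacle. For fixed $A,B$ the sum over $C$ converges absolutely, since $0\le\phi\le 1/e$. Countable Jensen then follows directly from the supporting line $\phi(x)\le\phi(\bar x)+\phi'(\bar x)(x-\bar x)$ at $\bar x=\P(A\mid B)\in(0,1)$; the cases $\bar x\in\{0,1\}$ are trivial. The only possibly infinite sums are the outer ones over $A$ and $B$, whose terms are nonnegative, which you already handle correctly.
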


\begin{rem}\label{rem: partitions defined by random variables}
	Throughout this paper we will use the same symbol to denote both a random variable and the partition of the space of sample paths that it defines. More precisely, let $X:(G^{\Z_{+}},\P)\to D$ is a random variable from the space of sample paths with values on a countable set $D$. Then we will identify $X$ with the countable partition $\rho_X$  where two sample paths  $\mathbf{w},\mathbf{w^\prime}\in G^{\mathbb{Z}_{+}}$ belong to the same element of $\rho_X$ if and only if $X(\mathbf{w})=X(\mathbf{w^{\prime}})$.
\end{rem}

\subsection{Escape probability and the asymptotic range}\label{subsection: escape prob and the asymptotic range}

The \emph{range at time $n\ge 1$} of the $\mu$-random walk on the group $G$ corresponds to the random variable $R_n\coloneqq |\{w_0,w_1,\ldots, w_n\}|$ that counts the number of distinct elements of $G$ visited by the trajectory of the random walk up to time $n$. It is a consequence of Kingman's subadditive ergodic theorem \cite[Theorem 5]{Kingman1968} that the limit
$$
R(\mu)\coloneqq \lim_{n\to \infty}\frac{R_n}{n}
$$
exists almost surely and in $L^1(G^{\Z_{+}})$, and that it is a constant. Furthermore, we have $R(\mu)=\P(w_n\neq e_G\text{ for all }n\ge 1)$. This equality is proved in in \cite[Theorem I.4.1]{Spitzer1976} for $G=\Z^d$, $d\ge 1$, and in \cite[Lemma 1]{Dyubina1999} for the general case of a countable group. 

\begin{defn} For each probability measure $\mu$ on a countable group $G$, we define its associated \emph{escape probability} by $\pesc{\mu}\coloneqq \P(w_n\neq e_G \text{ for every }n\ge 1)$.
\end{defn}
The following result guarantees the continuity of the escape probability on groups with at least cubic growth.
\begin{thm}[{\cite[Theorem 1.4]{Silva2025}}]\label{thm: continuity of range} Let $\mu$ be a probability measure on a countable group $G$. Suppose that $\langle \supp{\mu}\rangle_{+}$ contains a finitely generated subgroup of at least cubic growth. Let $\{\mu_k\}_{k\ge 1}$ be a sequence of probability measures on $G$ such that $\lim_{k\to \infty}\mu_{k}(g)=\mu(g)$, for every $g\in G$. Then, we have $\lim_{k\to \infty}\pesc{\mu_k}=\pesc{\mu}$.
\end{thm}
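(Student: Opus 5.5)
The plan is to prove lower and upper semicontinuity of $\mu\mapsto\pesc{\mu}$ separately. The upper bound is soft. The lower bound needs a uniform-in-$k$ estimate on late returns, and that estimate is where the cubic growth hypothesis enters.

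\emph{Upper semicontinuity.} For $N\ge 1$ set $p_N(\mu)\coloneqq\P_\mu(w_n\neq e_G \text{ for } 1\le n\le N)$. Then $p_N(\mu)$ decreases to $\pesc{\mu}$ as $N\to\infty$. For fixed $N$, the law of $(g_1,\dots,g_N)$ under $\mu_k^{\otimes N}$ converges in total variation to $\mu^{\otimes N}$, by pointwise convergence and Scheff\'e's lemma. Hence $p_N(\mu_k)\to p_N(\mu)$. This gives $\limsup_k \pesc{\mu_k}\le \lim_k p_N(\mu_k)=p_N(\mu)$ for every $N$, and letting $N\to\infty$ yields $\limsup_k\pesc{\mu_k}\le\pesc{\mu}$.

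\emph{Lower semicontinuity.} If $\pesc{\mu}=0$ there is nothing to prove, so assume the $\mu$-walk is transient. Write $G_\nu\coloneqq\sum_{n\ge0}\nu^{*n}(e_G)$, so that $\pesc{\nu}=1/G_\nu$. It then suffices to show $\limsup_k G_{\mu_k}\le G_\mu$. Each finite partial sum $\sum_{n<N}\mu_k^{*n}(e_G)$ converges to the corresponding sum for $\mu$. So the whole task reduces to a uniform tail bound of the form
\begin{equation*}
\sup_{k\ge k_0}\sum_{n\ge N}\mu_k^{*n}(e_G)\xrightarrow[N\to\infty]{}0 .
\end{equation*}

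To obtain it, I would use the hypothesis on the semigroup generated by $\supp{\mu}$. Choose a finite set $F\subset\supp{\mu}$ whose generated semigroup contains a finitely generated subgroup $H$ of at least cubic growth. Let $\eta$ be the uniform measure on $F$. Since $\mu_k(g)\to\mu(g)>0$ for $g\in F$, there are $\varepsilon>0$ and $k_0$ such that $\mu_k\ge\varepsilon\eta$ for all $k\ge k_0$. We can then write $\mu_k=\varepsilon\eta+(1-\varepsilon)\nu_k$ with $\nu_k$ a probability measure. The aim is a bound $\mu_k^{*n}(e_G)\le C n^{-3/2}$, or at least some summable bound $\psi(n)$ that is independent of $k\ge k_0$.

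For this I would work with the Markov operators $P_k$ of $\mu_k$ on $\ell^2(G)$ and bound $\|P_k^n\|_{1\to\infty}$ through a Nash-type inequality. The Dirichlet form of $P_k^*P_k$ (or of $I-P_k$) dominates, up to a factor depending only on $\varepsilon$, the Dirichlet form of a fixed symmetric measure built from $\eta$ (for instance $\check\eta*\eta$, or $\eta$ combined with $\check\eta$). By the Coulhon--Saloff-Coste isoperimetric inequality, on each coset of $H$ this fixed form satisfies a Nash inequality of dimension $3$. Iterating this gives $\|P_k^{2n}\|_{1\to\infty}\le C n^{-3/2}$ with $C$ depending only on $F$ and $\varepsilon$.

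\emph{Expected main obstacle.} The hard step is making this comparison rigorous. The measures $\mu_k$ are neither symmetric nor supported in $H$, and $H$ need not have finite index; it is only contained in the semigroup generated by $\supp{\mu}$. The other steps of $\nu_k$ are interleaved with the $\eta$-steps, so the convolution structure on $H$ is not directly available. My first attempt would be the $\ell^2$ argument itself: bound $\|P_k f\|_2^2$ through the Dirichlet form of the symmetrized kernel $P_k^*P_k\ge\varepsilon^2\,\check\eta*\eta$ plus positive terms. I would then apply the Nash inequality of the coset decomposition $G=\bigsqcup Hx$ uniformly on each coset, which only requires that the right-multiplication structure by elements of $H$ be respected. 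Should this fail, a fallback is to condition on the $\nu_k$-steps and on the times of the $\eta$-steps. That reduces the problem to bounding $\sup_g\P(x_1h_1x_2h_2\cdots=g)$ with $h_i$ i.i.d.\ of law $\eta$, which I would try to control using the same heat-kernel decay applied to conjugated copies of $\eta$.
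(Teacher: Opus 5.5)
Your overall architecture is sound. The upper semicontinuity argument via $p_N$ is correct, and lower semicontinuity does reduce to a bound $\mu_k^{*n}(e_G)\le\psi(n)$ with $\sum_n\psi(n)<\infty$, uniform in $k\ge k_0$.

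The gap is in the comparison step that is supposed to produce this bound; as set up, it fails. For non-self-adjoint $P_k$, the identity driving the $\ell^2$ iteration is $\|P_kf\|_2^2=\|f\|_2^2-\mathcal{E}_{\check\mu_k*\mu_k}(f)$. So the Dirichlet form of $I-P_k$, i.e.\ that of $\frac12(\mu_k+\check\mu_k)$, plays no role. For $\mu=\delta_g$ that form is non-degenerate, while $\|\mu^{*n}\|_2=1$ for all $n$.

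The multiplicative symmetrization is the right object. However, $\varepsilon^2\check\eta*\eta$ only charges $F^{-1}F$ (or $FF^{-1}$, depending on conventions), and the group generated by $F^{-1}F$ need not contain $H$ or have cubic growth. Take $G=\Z/2\Z*\Z/3\Z=\langle a\rangle*\langle b\rangle$ and $\mu=\eta=\frac12(\delta_a+\delta_b)$. Here $\langle\supp{\mu}\rangle_+=G$ contains a free subgroup of rank $2$. Yet $F^{-1}F=\{e_G,ab,(ab)^{-1}\}$ generates an infinite cyclic group. So the form of $\check\eta*\eta$ satisfies only a one-dimensional Nash inequality, and your iteration gives at best $\|\mu^{*n}\|_2^2\lesssim n^{-1/2}$, which is not summable. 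The claim that this fixed form satisfies a $3$-dimensional Nash inequality on cosets of $H$ is therefore false: $\check\eta*\eta$ is not a generating measure of $H$. Your fallback (conditioning on the $\nu_k$-steps) is too vague to repair this.

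The missing idea is to symmetrize a suitable \emph{power} of $\mu$.
\begin{itemize}
\item Let $\Lambda\subseteq\Z_{\ge1}$ be the set of lengths of nonempty $F$-words equal to $e_G$; it is nonempty since $e_G\in H$. Let $q=\gcd\Lambda$.
\item Suppose two positive $F$-words represent $h\in H$ and a third represents $h^{-1}$. Concatenation puts both sums of lengths in $\Lambda$, so the two lengths of $h$ agree mod $q$. Hence length mod $q$ is a homomorphism $H\to\Z/q\Z$.
\item Its kernel $H_0$ has finite index in $H$, so it is finitely generated with at least cubic growth. Fix a finite symmetric generating set $S$ of $H_0$.
\item $\Lambda$ contains all large multiples of $q$. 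So for $m$ a large multiple of $q$, both $e_G$ and every $s\in S$ are products of exactly $m$ letters of $F$. This gives $\check\mu^{*m}*\mu^{*m}(s)\ge\mu^{*m}(e_G)\mu^{*m}(s)>0$ on $S$.
\item By pointwise convergence, $\check\mu_k^{*m}*\mu_k^{*m}\ge c\,\sigma$ for $k\ge k_0$, with $\sigma$ uniform on $S$.
\end{itemize}

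Now run your iteration for $P_k^m$. Set $f_n=P_k^{mn}\delta_{e_G}$ and $u_n=\|f_n\|_2^2=\|\mu_k^{*mn}\|_2^2$. Since $\|f_n\|_1=1$, one has $u_n-u_{n+1}=\mathcal{E}_{\check\mu_k^{*m}*\mu_k^{*m}}(f_n)\ge c\,\mathcal{E}_\sigma(f_n)\ge c'u_n^{5/3}$. The last step uses the $3$-dimensional Nash inequality for $\sigma$ on left cosets of $H_0$. It adds over cosets when written as $\|f\|_2^2\le C(t\,\mathcal{E}(f)+t^{-3/2}\|f\|_1^2)$.

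Thus $u_n\le Cn^{-3/2}$ uniformly in $k\ge k_0$. Finally, $\mu_k^{*N}(e_G)\le\sup_y\mu_k^{*2j}(y)\le\|\mu_k^{*j}\|_2^2$ with $j=m\lfloor N/(2m)\rfloor$. This gives the uniform summable tail you need.
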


\subsection{Convergence of probability measures}\label{section: convergence of probability measures}
The following result is well-known; a proof can be found in \cite[Lemma 3.1]{Silva2025}.
\begin{lem}\label{lem: pointwise convergence is equivalent to total variation convergence}
	Let $\mu$ be a probability measure on a countable group $G$, and consider a sequence $\{\mu_k\}_{k\ge 1}$ of probability measures on $G$. The following are equivalent.
	\begin{enumerate}
		\item  $\sum_{g\in G}|\mu_k(g)-\mu(g)|\xrightarrow[k\to \infty]{} 0$.
		\item $\mu_{k}(g)\xrightarrow[k\to \infty]{}\mu(g)$, for every $g\in G$.
	\end{enumerate}
\end{lem}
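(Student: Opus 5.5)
The statement to prove is Lemma \ref{lem: pointwise convergence is equivalent to total variation convergence}: for probability measures on a countable set, pointwise convergence is equivalent to $\ell^1$ convergence (a Scheffé-type lemma). I would prove the two implications separately, and only the converse direction needs any work.

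The implication (1)$\Rightarrow$(2) is immediate. For each fixed $g\in G$ we have $|\mu_k(g)-\mu(g)|\le \sum_{h\in G}|\mu_k(h)-\mu(h)|$, and the right-hand side tends to $0$.

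For (2)$\Rightarrow$(1), I would use the fact that $\mu$ and the $\mu_k$ all have total mass one, so no mass can escape to infinity.
\begin{enumerate}
\item Fix $\varepsilon>0$. Since $G$ is countable and $\sum_g\mu(g)=1$, choose a finite set $F\subset G$ with $\mu(F)\ge 1-\varepsilon$.
\item By pointwise convergence on the finite set $F$, there is $k_0$ such that for all $k\ge k_0$ we have $\sum_{g\in F}|\mu_k(g)-\mu(g)|\le\varepsilon$. In particular $\mu_k(F)\ge \mu(F)-\varepsilon\ge 1-2\varepsilon$, and hence $\mu_k(G\setminus F)\le 2\varepsilon$.
\item Split the sum over $F$ and its complement, bounding the tail by the triangle inequality:
\[
\sum_{g\in G}|\mu_k(g)-\mu(g)|\le \sum_{g\in F}|\mu_k(g)-\mu(g)|+\mu_k(G\setminus F)+\mu(G\setminus F)\le \varepsilon+2\varepsilon+\varepsilon=4\varepsilon
\]
for all $k\ge k_0$.
\end{enumerate}
Since $\varepsilon>0$ was arbitrary, (1) follows.

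The only point needing care is the tail estimate for $\mu_k$ outside $F$. Pointwise convergence by itself does not control it; it is controlled by normalization, since $\mu_k(F)$ is close to $\mu(F)$ and both measures have total mass one. An alternative route is the identity $\sum_g|\mu_k(g)-\mu(g)|=2\sum_g(\mu(g)-\mu_k(g))^+$. The summands $(\mu(g)-\mu_k(g))^+$ are dominated by $\mu(g)$, which is summable, and they tend to $0$ pointwise, so dominated convergence on the countable set $G$ gives (1) directly.
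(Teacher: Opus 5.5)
Your proof is correct. The implication (1)$\Rightarrow$(2) is immediate. For (2)$\Rightarrow$(1), you correctly identify the point needing care, namely bounding $\mu_k(G\setminus F)$ via normalization, and your dominated-convergence alternative using $\sum_g|\mu_k(g)-\mu(g)|=2\sum_g(\mu(g)-\mu_k(g))^+$ is also valid.

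The paper does not prove this lemma in the text; it calls the result well known and defers to \cite[Lemma 3.1]{Silva2025}. Your argument is the standard Scheffé-type proof of this fact, so nothing is missing.
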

We recall some preliminary results on the convergence of probability measures and Shannon entropies that we will use in the rest of the paper. 

\begin{lem}\label{lem: continuitiy of finite time entropy} Let $\mu$ be a probability measure on a countable group $G$. Consider a sequence of probability measures $\{\mu_k\}_{k\ge 1}$ on $G$ such that $\lim_{k\to \infty}\mu_k(g)=\mu(g)$, for each $g\in G$. Suppose that there is a finite subset $S\subseteq G$ such that $\supp{\mu}\subseteq S$ and $\supp{\mu_k}\subseteq S$ for all $k\ge 1$. Then $\lim_{k\to\infty}H(\mu_k)=H(\mu)$.
\end{lem}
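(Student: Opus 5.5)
The plan is to reduce the claim to continuity of a finite sum of continuous functions of finitely many variables. Since $S$ is finite, write $S=\{s_1,\ldots,s_N\}$. For every $k\ge 1$ we have $\supp{\mu_k}\subseteq S$, and by the pointwise convergence also $\supp{\mu}\subseteq S$. Hence
\[
H(\mu_k)=\sum_{i=1}^{N}\phi(\mu_k(s_i)),\qquad H(\mu)=\sum_{i=1}^{N}\phi(\mu(s_i)),
\]
where $\phi:[0,1]\to\R$ is given by $\phi(x)=-x\log x$ for $x>0$ and $\phi(0)=0$. This convention is the standard one implicit in the definition of Shannon entropy, since terms with $\mu(g)=0$ do not contribute. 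Points of $G\setminus S$ contribute zero to every entropy involved, so the infinite sums really are these finite sums.

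The key step is that $\phi$ is continuous on the closed interval $[0,1]$. On $(0,1]$ this is clear. At $0$ we use $\lim_{x\to 0^+}x\log x=0$, which follows for instance from L'H\^opital's rule applied to $\log x/(1/x)$, or from the bound $|x\log x|\le C_\varepsilon x^{1-\varepsilon}$ near $0$.

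Now take any $i\in\{1,\ldots,N\}$. The hypothesis gives $\mu_k(s_i)\to\mu(s_i)$, so continuity of $\phi$ gives $\phi(\mu_k(s_i))\to\phi(\mu(s_i))$. Summing these $N$ convergent sequences yields $H(\mu_k)\to H(\mu)$.

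There is no real obstacle here. The only point needing care is the boundary behaviour of $\phi$ at $0$, namely atoms $s_i$ with $\mu(s_i)=0$ but $\mu_k(s_i)>0$. This is handled precisely by the continuity of $\phi$ at $0$. The finiteness of $S$ is essential, because it lets us exchange the limit with a finite sum, avoiding the tail contributions that make Shannon entropy only lower semicontinuous in general.
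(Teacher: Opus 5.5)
Your proposal is correct and complete. The paper states this lemma without proof as a standard fact, and your argument is the standard justification: on $S$ both entropies are finite sums of $\phi(x)=-x\log x$ (with $\phi(0)=0$), and $\phi$ is continuous on $[0,1]$, including at $0$.
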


The following result is  proved in \cite[Section 3]{AmirAngelVirag2013}. 	
\begin{lem}\label{lem: convolutions entropy convergence} Let $G$ be a countable group and consider a probability measure $\mu$ on $G$ with $H(\mu)<\infty$. Consider a sequence of probability measures $\{\mu_k\}_{k\ge 1}$ on $G$ such that $\lim_{k\to \infty}\mu_k(g)=\mu(g)$ for all $g\in G$, $H(\mu_k)<\infty$, for all $k\ge 1$, and $\lim_{k\to \infty}H(\mu_k)=H(\mu)$. Then for any $n\ge 1$ it holds that $\lim_{k\to\infty}H(\mu^{*n}_k)=H(\mu^{*n})$.
\end{lem}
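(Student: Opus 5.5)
The plan is to reduce to the case of one measure at a time via a quantitative continuity property of Shannon entropy, and then propagate to the $n$-fold convolutions by induction on $n$. The key inequality I would use is subadditivity-type control: for probability measures $\nu,\lambda$ on $G$, the law $\nu*\lambda$ is the push-forward of $\nu\otimes\lambda$ under the multiplication map. This gives $H(\nu*\lambda)\le H(\nu)+H(\lambda)$.

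First I establish pointwise convergence of convolutions. By Lemma \ref{lem: pointwise convergence is equivalent to total variation convergence}, $\|\mu_k-\mu\|_1\to 0$. Since $\|\nu*\lambda-\nu'*\lambda'\|_1\le \|\nu-\nu'\|_1+\|\lambda-\lambda'\|_1$, induction gives $\|\mu_k^{*n}-\mu^{*n}\|_1\to0$, and in particular $\mu_k^{*n}(g)\to\mu^{*n}(g)$ for every $g$. Lower semicontinuity of Shannon entropy under pointwise convergence is immediate from Fatou's lemma, because each term $-x\log x$ is continuous and nonnegative. Hence $\liminf_k H(\mu_k^{*n})\ge H(\mu^{*n})$, and it remains to prove the matching upper bound.

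For the upper bound I would use a coupling and chain-rule argument. Realize $\mu_k$ and $\mu$ on a common probability space via a maximal coupling $(X_k,X)$, with $\P(X_k\neq X)=\frac12\|\mu_k-\mu\|_1=:\varepsilon_k$. Take $n$ independent copies and set $W_k=X_k^{(1)}\cdots X_k^{(n)}$ and $W=X^{(1)}\cdots X^{(n)}$. Let $I_i=\mathds{1}\{X_k^{(i)}\neq X^{(i)}\}$, and let $D_i$ be the random variable equal to $X_k^{(i)}$ on $\{I_i=1\}$ and to a fixed symbol otherwise. The variable $W_k$ is a function of $W$ together with $(X^{(i)}, D_i)_{i}$; more simply, $W_k$ is a function of $(X^{(i)},I_i,D_i)_{i\le n}$. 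Combining this with Lemma \ref{lem: basic properties entropy}, I obtain
\[
H(\mu_k^{*n})\le H(W)+\sum_{i}\bigl[H(I_i)+H(D_i)\bigr]+H\bigl(X^{(1)},\dots,X^{(n)}\mid W\bigr).
\]
This crude bound is useless, however, because of the last term. The cleaner route is the one in \cite[Section 3]{AmirAngelVirag2013}: show that the family $\{\mu_k^{*n}\}_k$ has \emph{uniformly integrable} entropy tails. Concretely, the goal is that for every $\eta>0$ there is a finite set $F$ with $\sup_k\sum_{g\notin F}-\mu_k^{*n}(g)\log\mu_k^{*n}(g)<\eta$. Given that, pointwise convergence on $F$ yields $\limsup_k H(\mu_k^{*n})\le H(\mu^{*n})+\eta$.

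Uniform tail control for $n=1$ follows from the hypotheses: $H(\mu_k)\to H(\mu)$ together with convergence of the finitely many terms on any finite $F$ forces the tail sums $\sum_{g\notin F}$ to converge to the corresponding tail for $\mu$, which is small for large $F$. To pass to $n$, I would write $H(\mu_k^{*n})=H(W_k)$ and split according to the event $A$ that all $X_k^{(i)}$ lie in a large finite set $F_1$. Conditioning on the indicator of $A$ costs at most $\log 2$ plus a small amount. Then $H(W_k\mid A)\le \log|F_1^n|$, weighted by $\P(A)$. On $A^c$ I bound $H(W_k\mid A^c)\P(A^c)\le \P(A^c)\, H(X_k^{(1)},\dots,X_k^{(n)}\mid A^c)$, and this is at most $n\cdot$(the tail entropy of $\mu_k$ outside $F_1$) plus $O(\P(A^c)\log(1/\P(A^c)))$. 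The resulting statement is "entropy mass outside a finite set is uniformly small", and it is preserved under convolution by exactly this decomposition.

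The main obstacle is carrying out this tail-splitting carefully. One must check that the mass of $\mu_k^{*n}$ outside $F_1^n$, weighted by $-\log$, is controlled by the $n=1$ tails uniformly in $k$. This uses the elementary inequality $-\sum_{g\in B} p_g\log p_g\le p(B)\log|B|$ only for finite $B$, and for infinite $B$ the conditional-entropy bound above.
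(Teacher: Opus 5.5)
Your final route is essentially the argument of \cite[Section 3]{AmirAngelVirag2013}, which the paper cites instead of giving a proof: Fatou's lemma gives $\liminf_k H(\mu_k^{*n})\ge H(\mu^{*n})$, and uniform entropy-tightness of $\{\mu_k^{*n}\}_k$, inherited from that of $\{\mu_k\}_k$, gives the matching upper bound; this is the same entropy-tightness criterion the paper invokes in the proof of Lemma \ref{lem: entropy to projections}. Only the $A^c$ part of your splitting is needed: since $\mu_k^{*n}(g)=\P(W_k=g,\,A^c)$ for $g\notin F_1^n$, the tail of $\mu_k^{*n}$ outside $F_1^n$ is at most $\P(A^c)H(W_k\mid A^c)-\P(A^c)\log\P(A^c)$, and your $A^c$ estimate makes this uniformly small (its constant depends on $n$ and $\sup_k H(\mu_k)$, and $\P(A^c)\le n\mu_k(F_1^c)$ is uniformly small by $\ell^1$-convergence), so the $\log 2$ term, the bound on $A$, and the finite-$B$ inequality can all be dropped (that inequality, as written, also omits the term $-p(B)\log p(B)$).
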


Next, we recall the fact that the asymptotic entropy is upper-semicontinuous. The following result is proved in \cite[Proposition 4]{AmirAngelVirag2013}. Previously, this was proved in \cite[Lemma 1]{Erschler2011} with the additional assumption that the supports of all the probability measures in the sequence are contained in a fixed finite set.
\begin{prop}[{\cite[Proposition 4]{AmirAngelVirag2013}}]\label{prop: upper semicontinuous entropy} Let $G$ be a countable group and consider a probability measure $\mu$ on $G$ together with a sequence of probability measures $\{\mu_{k}\}_{k\ge 1}$ on $G$. Suppose that $\lim_{k\to \infty}\mu_{k}(g)=\mu(g)$ for every $g\in G$, and that $\lim_{k\to \infty}H(\mu_k)=H(\mu)$. Then we have $\limsup_{k\to \infty}h(\mu_k)\le h(\mu)$.
\end{prop}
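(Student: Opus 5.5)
The statement to prove is Proposition \ref{prop: upper semicontinuous entropy}: upper semicontinuity of asymptotic entropy under pointwise convergence together with convergence of Shannon entropies. The plan is to write $h$ as an infimum of continuous functions of the step distribution. The infimum of a family of continuous functions is upper semicontinuous, so this would finish the proof.

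The first step is the subadditivity of $n\mapsto H(\nu^{*n})$ for any probability measure $\nu$ on $G$ with $H(\nu)<\infty$. Write $w_{n+m}=w_n\cdot(w_n^{-1}w_{n+m})$, where the two factors are independent with laws $\nu^{*n}$ and $\nu^{*m}$. Since $w_{n+m}$ is a function of the pair, Lemma \ref{lem: basic properties entropy} gives
\[
H(\nu^{*(n+m)})\le H(\nu^{*n})+H(\nu^{*m}).
\]
In particular every $H(\nu^{*n})$ is finite. By Fekete's lemma,
\[
h(\nu)=\lim_{n\to\infty}\frac{H(\nu^{*n})}{n}=\inf_{n\ge 1}\frac{H(\nu^{*n})}{n}.
\]

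Next, handle the measures with infinite entropy. Since $H(\mu_k)\to H(\mu)$, we may assume $H(\mu)<\infty$; otherwise $h(\mu)$ need not be finite, but if $H(\mu)=\infty$ the claim should be read with $h(\mu)=\infty$ and is trivial. Discarding finitely many $k$, we may then also assume $H(\mu_k)<\infty$ for all $k$.

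Now fix $n\ge 1$. By the infimum formula, $h(\mu_k)\le H(\mu_k^{*n})/n$ for every $k$. By Lemma \ref{lem: convolutions entropy convergence}, $H(\mu_k^{*n})\to H(\mu^{*n})$ as $k\to\infty$. Hence
\[
\limsup_{k\to\infty}h(\mu_k)\le \frac{H(\mu^{*n})}{n}.
\]
Taking the infimum over $n$ gives $\limsup_{k\to\infty}h(\mu_k)\le h(\mu)$.

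All the real analytic content lies in Lemma \ref{lem: convolutions entropy convergence}, which we are allowed to assume. If we had to reprove it, the main obstacle would be tightness of $-\mu_k^{*n}\log\mu_k^{*n}$. The approach would be to bound the entropy of the tail of the $n$-fold product using subadditivity and the convergence $H(\mu_k)\to H(\mu)$, which rules out entropy escaping to infinity. Given the lemma, the remaining steps are routine.
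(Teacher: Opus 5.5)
Your proof is correct and matches the argument behind the cited \cite[Proposition 4]{AmirAngelVirag2013}; the paper only quotes that result. The argument writes $h(\nu)=\inf_n H(\nu^{*n})/n$ via subadditivity and Fekete's lemma, uses Lemma \ref{lem: convolutions entropy convergence} to make each $H(\mu_k^{*n})/n$ converge to $H(\mu^{*n})/n$, and then takes the infimum over $n$. Your degenerate case $H(\mu)=\infty$ is indeed trivial, because $H(\mu^{*n})\ge H(\mu)$ forces $h(\mu)=\infty$ under the paper's convention $h\in[0,\infty]$.
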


Finally, we prove that convergence of probability measures and convergence of Shannon entropies are preserved when passing to a quotient.

\begin{lem}\label{lem: entropy to projections} Let $G_1$ and $G_2$ be countable groups, and let $\pi:G_1\to G_2$ be an epimorphism. Consider a probability measure $\mu$ and a sequence of probability measures $\{\mu_k\}_{k\ge 1}$ on $G_1$ such that $H(\mu)<\infty$ and $H(\mu_{k})<\infty$, for all $k\ge 1$. Suppose that $\lim_{k\to \infty} \mu_k(g)=\mu(g)$ for all $g\in G_1$, and that $\lim_{k\to \infty} H(\mu_k)=H(\mu)$. Then it also holds that $\lim_{k\to \infty} \pi_{*}\mu_k(g)=\pi_{*}\mu(g)$ for all $g\in G_2$, and that $\lim_{k\to \infty} H(\pi_{*}\mu_k)=H(\pi_{*}\mu)$.
\end{lem}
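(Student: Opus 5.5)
We must prove Lemma \ref{lem: entropy to projections}. The plan is to obtain pointwise convergence of the pushforwards from total variation convergence, and convergence of entropies from lower semicontinuity combined with a matching upper bound that comes from decomposing $H(\mu)$ along the fibres of $\pi$.

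\emph{Pointwise convergence.} By Lemma \ref{lem: pointwise convergence is equivalent to total variation convergence}, $\sum_{g\in G_1}|\mu_k(g)-\mu(g)|\to 0$. Pushing forward cannot increase total variation distance:
\[
\sum_{x\in G_2}|\pi_*\mu_k(x)-\pi_*\mu(x)|\le \sum_{x\in G_2}\sum_{g\in\pi^{-1}(x)}|\mu_k(g)-\mu(g)| \to 0 .
\]
This gives pointwise convergence $\pi_*\mu_k(x)\to\pi_*\mu(x)$ for every $x\in G_2$.

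\emph{Lower semicontinuity.} Shannon entropy is lower semicontinuous under pointwise convergence, since it is a sum of the nonnegative continuous terms $-t\log t$ and Fatou's lemma applies. Hence
\[
\liminf_{k\to\infty}H(\pi_*\mu_k)\ge H(\pi_*\mu),
\qquad
\liminf_{k\to\infty}H(\mu_k|\pi)\ge H(\mu|\pi).
\]
Here $H(\nu|\pi)\coloneqq H(\nu)-H(\pi_*\nu)$ denotes the conditional entropy of $g$ given $\pi(g)$ when $g\sim\nu$. Explicitly,
\[
H(\nu|\pi)=\sum_{g\in G_1}\nu(g)\log\frac{\pi_*\nu(\pi(g))}{\nu(g)}=\sum_{x\in G_2}\pi_*\nu(x)\,H(\nu_x),
\]
where $\nu_x$ is $\nu$ conditioned on the fibre $\pi^{-1}(x)$. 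For this second quantity, the liminf inequality follows from Fatou's lemma applied fibre by fibre. For each fixed $x$ with $\pi_*\mu(x)>0$, we have $\pi_*\mu_k(x)\to\pi_*\mu(x)$, and the conditional measures $(\mu_k)_x\to\mu_x$ pointwise, so $\liminf_k H((\mu_k)_x)\ge H(\mu_x)$ by lower semicontinuity. Summing over $x$ with Fatou's lemma yields the claim. (Alternatively, the terms $\nu(g)\log(\pi_*\nu(\pi g)/\nu(g))$ are nonnegative and converge pointwise, so Fatou applies directly.)

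\emph{Conclusion.} Both quantities are finite since $H(\pi_*\nu)\le H(\nu)<\infty$. Then
\[
H(\mu)=\lim_{k\to\infty} H(\mu_k)=\lim_{k\to\infty}\bigl(H(\pi_*\mu_k)+H(\mu_k|\pi)\bigr).
\]
Taking the limsup of the first term gives
\[
\limsup_{k\to\infty} H(\pi_*\mu_k)\le H(\mu)-\liminf_{k\to\infty} H(\mu_k|\pi)\le H(\mu)-H(\mu|\pi)=H(\pi_*\mu).
\]
Together with the lower bound this gives $\lim_{k\to\infty} H(\pi_*\mu_k)=H(\pi_*\mu)$.

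The only delicate point is checking the lower semicontinuity of the conditional entropy term. This is handled by the pointwise nonnegative expression above, noting that $\pi_*\mu_k(\pi(g))\to\pi_*\mu(\pi(g))$ for each $g$. Terms with $\mu(g)=0$ contribute $0$ in the limit, which is consistent with Fatou's lemma.
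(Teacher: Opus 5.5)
Your proof is correct, but it gets the entropy convergence by a different route than the paper.

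The paper relies on the criterion \cite[Lemma 3.2]{AmirAngelVirag2013}: under pointwise convergence, $H(\mu_k)\to H(\mu)$ is equivalent to entropy-tightness of $\{\mu_k\}_{k\ge 1}$. It then transfers tightness to the pushforwards. For this it uses the termwise bound $-\pi_*\mu_k(x)\log \pi_*\mu_k(x)\le \sum_{h\in\pi^{-1}(x)}-\mu_k(h)\log\mu_k(h)$, together with the observation that fibres over $x\notin\pi(F)$ avoid $F$.

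You instead write $H(\nu)=H(\pi_*\nu)+H(\nu\mid\pi)$ as a sum of two nonnegative functionals. Each is lower semicontinuous under pointwise convergence by Fatou's lemma. Since the sum converges, each summand must converge.

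What each approach buys:
\begin{itemize}
\item Your argument is self-contained. It uses only Fatou's lemma and the chain rule, with subtraction justified by $H(\pi_*\nu)\le H(\nu)<\infty$.
\item Your argument also yields $H(\mu_k\mid\pi)\to H(\mu\mid\pi)$ at no extra cost.
\item The paper's argument is shorter once the cited criterion is granted. It also exhibits the tail control explicitly: a single finite set $\pi(F)$ works uniformly for all $k$.
\end{itemize}
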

\begin{proof}
	Thanks to the data processing inequality, we have that $H(\pi_{*}\mu)\le H(\mu)<\infty$ and $H(\pi_{*}\mu_k)\le H(\mu_k)<\infty$ for each $k\ge 1$. Additionally, for every $g\in G_2$ we can use Lemma \ref{lem: pointwise convergence is equivalent to total variation convergence} to obtain
	\[
	|\pi_{*}\mu_k(g)-\pi_{*}\mu(g)|\le \sum_{h\in \pi^{-1}(\{g\})}|\mu_k(h)-\mu(h)|\le \sum_{h\in G_1}|\mu_k(h)-\mu(h)|\xrightarrow[k\to \infty]{}0.
	\]
	
	Let us now show that $H(\pi_{*}\mu_k)\xrightarrow[k\to \infty]{}H(\pi_{*}\mu)$. Thanks to \cite[Lemma 3.2]{AmirAngelVirag2013}, it suffices to show that the sequence $\{\pi_{*}\mu_k\}_{k\ge 1}$ is entropy-tight. Let $\varepsilon>0$ and, using that $\{\mu_k\}_{k\ge 1}$ is entropy-tight, choose a finite subset $F\subseteq G_1$ such that $\sum_{x\notin F}-\mu_k(g)\log(\mu_k(g))<\varepsilon$, for all $k\ge 1$. Let us consider the finite subset $\pi(F)\subseteq G_2$. Then we have
	\begin{align*}
		\sum_{g\notin \pi(F)}-\pi_{*}\mu_k(g)\log\left(\pi_{*}\mu_k(g)\right)&=	\sum_{g\notin \pi(F)}\sum_{h\in  \pi^{-1}(\{g\}) }-\mu_k(h)\log\left(\sum_{x\in  \pi^{-1}(\{g\}) }\mu_k(x)\right)\\
		&\le \sum_{g\notin \pi(F)}\sum_{h\in  \pi^{-1}(\{g\}) }-\mu_k(h)\log\left(\mu_k(h)\right)\\
		&\le \sum_{h\notin F}-\mu_k(h)\log(\mu_k(h))<\varepsilon.
	\end{align*}
	This finishes the proof.
\end{proof}

The next result is the main technical theorem of \cite{Silva2025}, which gives sufficient conditions under which asymptotic entropy is continuous on a wreath product.
\begin{thm}[{\cite[Theorem 7.1]{Silva2025}}]\label{thm: continuity asymptotic entropy wreath prods}
	Let $A$ and $B$ be countable groups and let $\mu$ be a probability measure on $A\wr B\coloneqq \bigoplus_{B}A\rtimes B$ with $H(\mu)<\infty$. Consider a sequence $\{\mu_k\}_{k\ge 1}$ of probability measures on $A\wr B$ with $H(\mu_k)<\infty$ for all $k\ge 1$, and such that 
	\begin{enumerate}[(1)]
		\item\label{item: main thm 1} $\lim_{k\to \infty }\mu_k(g)=\mu(g)$ for each $g \in A\wr B$, and
		\item\label{item: main thm 2} $\lim_{k\to \infty} H(\mu_k)=H(\mu)$.
	\end{enumerate}
	Denote by $\pi:A\wr B\to B$ the canonical epimorphism to the base group $B$. Suppose furthermore that
	\begin{enumerate}[(1)]\setcounter{enumi}{2}
		\item\label{item: main thm 3}  the $\pi_{*}\mu$-random walk on $B$ is transient,
		\item \label{item: main thm 4} $h(\pi_{*}\mu)=0$,
		\item\label{item: main thm 5} $\langle \supp{\pi_{*}\mu}\rangle_{+}$ is symmetric, and
		\item\label{item: main thm 6} $\lim_{k\to \infty}\pesc{\pi_{*}\mu_k}=\pesc{\pi_{*}\mu}$.
	\end{enumerate} Then $\lim_{k\to \infty}h(\mu_k)=h(\mu)$.
\end{thm}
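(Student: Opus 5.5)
Since the statement is \cite[Theorem 7.1]{Silva2025}, the proof I would give is a reconstruction, not the original argument. By Proposition \ref{prop: upper semicontinuous entropy}, hypotheses \ref{item: main thm 1} and \ref{item: main thm 2} already give $\limsup_{k\to\infty}h(\mu_k)\le h(\mu)$. So the whole task is the lower bound $\liminf_{k\to\infty}h(\mu_k)\ge h(\mu)$. Upper semicontinuity comes for free because $h(\mu)=\inf_n H(\mu^{*n})/n$ is an infimum of quantities that are continuous by Lemma \ref{lem: convolutions entropy convergence}. For the lower bound I need the opposite kind of formula. The aim is a family of functionals $L_N(\nu)$, each continuous in $\nu$ along sequences satisfying \ref{item: main thm 1}, \ref{item: main thm 2} and \ref{item: main thm 6}, with $L_N(\nu)\le h(\nu)$ for every admissible $\nu$ and $L_N(\mu)\to h(\mu)$ as $N\to\infty$. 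Then $\liminf_k h(\mu_k)\ge \lim_k L_N(\mu_k)=L_N(\mu)$ for each $N$, and letting $N\to\infty$ finishes the proof.

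To build $L_N$, write $w_n=(\varphi_n,b_n)$ with $b_n=\pi(w_n)$. Since $h(\pi_*\mu)=0$ by \ref{item: main thm 4}, Lemma \ref{lem: basic properties entropy} gives
\[
H(w_n)=H(\varphi_n\mid b_n)+o(n),
\]
so only the lamp configuration carries entropy. By transience \ref{item: main thm 3}, every base point is visited finitely often. Hence the lamp configuration stabilizes along the trajectory to a limit $\varphi_\infty$. The entropy should then be counted site by site over the range $\{b_0,\ldots,b_n\}$. Each time the base walk visits a site for the last time, that is, with probability $\pesc{\pi_*\mu}$ per step, the lamp value at that site is frozen. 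That value is a product of increments read off at the finitely many visits.

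I would define $L_N(\nu)$ as follows: $\pesc{\pi_*\nu}$ times the conditional entropy of the frozen lamp value at a freshly left site, computed only from the window of $N$ steps around the visit times and conditioned on the base path in that window. Such a quantity involves only finitely many increments together with the escape event. So it is continuous under \ref{item: main thm 1}, \ref{item: main thm 2} (via Lemma \ref{lem: convolutions entropy convergence}) and \ref{item: main thm 6}. The inequality $L_N\le h$ should follow from the chain rule and monotonicity in Lemma \ref{lem: basic properties entropy}, combined with $R_n/n\to\pesc{\pi_*\nu}$ from Subsection \ref{subsection: escape prob and the asymptotic range}. Conditioning on less information, namely only a window, can only increase the per-site entropy. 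The sites are distinct and their frozen values are jointly a function of $w_n$ together with bounded future information. So summing over the range gives a lower bound for $H(w_n)$ up to $o(n)$.

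The main obstacle is the convergence $L_N(\mu)\to h(\mu)$, i.e.\ that truncating to windows of length $N$ loses no entropy in the limit for $\mu$. Here I would use hypothesis \ref{item: main thm 5}: symmetry of $\langle\supp{\pi_*\mu}\rangle_+$ lets one reverse excursions of the base walk. This should show that the contribution from visits to a site separated by more than $N$ steps has vanishing conditional entropy as $N\to\infty$ (a tightness argument for the return times of a transient walk). This reversal is the step I expect to need the most care.
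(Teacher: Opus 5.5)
Your reduction to $\liminf_k h(\mu_k)\ge h(\mu)$ via Proposition~\ref{prop: upper semicontinuous entropy} is sound, and so is the abstract plan of finding continuous minorants $L_N\le h$ with $L_N(\mu)\to h(\mu)$. The minorant you propose, however, fails on both counts.

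\textbf{The inequality $L_N\le h$.} You argue that conditioning on a window ``can only increase the per-site entropy'' and then sum over the range. That points the wrong way. By the chain rule and item (3) of Lemma~\ref{lem: basic properties entropy}, a sum of per-site entropies with coarser conditioning is an \emph{upper} bound for the joint entropy of the lamps. The reverse inequality would need the frozen values at distinct sites to be conditionally independent given what you condition on.

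That independence fails for a general $\mu$ with only $H(\mu)<\infty$. An increment $g_i=(f_i,Y_i)$ changes the lamps on the whole finite set $X_{i-1}\supp{f_i}$, which may be far from the walker, so lamps off the range change too. If $f_i$ takes the same random value at two points of $B$, the two corresponding lamps are dependent even conditionally on the entire base path. Your description of the lamp at $x$ as ``a product of increments read off at the visits to $x$'' is therefore valid only for special measures.

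\textbf{Continuity of $L_N$.} This does not follow from (6). The event that a site is never revisited enters jointly with the window data, whereas (6) only controls its total probability.

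\textbf{The limit $L_N(\mu)\to h(\mu)$.} This is an exact per-site entropy formula for the limit walk, including the upper bound $h(\mu)\le\lim_N L_N(\mu)$. That is essentially the whole theorem, and for the reason above it need not even hold for general $\mu$. ``Reversing excursions by symmetry'' does not supply it.

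\textbf{What the argument behind this statement does instead.} This is visible here in Lemmas~\ref{lem: coarse trajectory has small entropy} and \ref{lem: starting point for iterated wreath products} and in the proof of Theorem~\ref{thm: iterated wreath products} for $m=2$. The minorant is chosen so that the hard step becomes an \emph{upper} bound on a conditional entropy, where monotonicity works in your favour. With $t=Nt_0$,
\[
\Big\lfloor \frac{n}{t}\Big\rfloor H(\mu_k^{*t})=H_{\mu_k}\bigl(\mathcal{P}_n^{t}(A\wr B)\bigr)\le H_{\mu_k}\bigl(\mathcal{P}_n^{t}(A\wr B)\mid w_n\bigr)+H_{\mu_k}(w_n),
\]
so $h(\mu_k)\ge \frac{1}{t}H(\mu_k^{*t})-\limsup_{n}\frac{1}{n}H_{\mu_k}\bigl(\mathcal{P}_n^{t}(A\wr B)\mid w_n\bigr)$. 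Lemma~\ref{lem: starting point for iterated wreath products} bounds the conditional term by $\varepsilon n+(H(\mu)+1)nn_0/N+H_{\mu_k}(\mathcal{P}_n^{t_0}(B)\mid w_n)+C$, uniformly in $k\ge K$; this is where (5) and (6) enter. Lemma~\ref{lem: coarse trajectory has small entropy} together with (4) makes the base term at most $\varepsilon n$.

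The quantity $\frac{1}{t}H(\nu^{*t})$ is continuous by Lemma~\ref{lem: convolutions entropy convergence} and converges to $h(\mu)$ at $\nu=\mu$ by definition. So no entropy formula for $h(\mu)$ and no independence between lamps is ever needed. One only has to recover a coarse trajectory from the endpoint $w_n$ at small cost.
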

\section{Iterated wreath products and free solvable groups}\label{section: free solvable groups}
The proof of Theorem \ref{thm: free solvable} relies on the fact that free solvable groups embed into iterated wreath products via the Magnus embedding. We therefore begin by establishing a continuity result for asymptotic entropy
on iterated wreath products, which will later be combined with the Magnus embedding to obtain Theorem \ref{thm: free solvable}.

The following result is an extension of Theorem \ref{thm: continuity asymptotic entropy wreath prods} for iterated wreath products and which is proved at the end of this subsection. 

\begin{thm}\label{thm: iterated wreath products}
	Let $A_1,A_2,\ldots, A_{m-1}, B$ be countable groups for some $m\ge 2$. Consider the groups $B_1 \coloneqq B$ and $B_{j+1}\coloneqq A_j\wr B_j$ for $j=1,\ldots,m-1$. Let $\mu$ be a probability measure on $B_m$ with $H(\mu)<\infty$, and consider a sequence of probability measures $\{\mu_k\}_{k\ge 1}$ on $B_m$ with $H(\mu_k)<\infty$ for all $k\ge 1$. Suppose that
	\begin{enumerate}[(1)]
		\item\label{item: iterated main thm 1} $\lim_{k\to \infty }\mu_k(g)=\mu(g)$ for each $g \in B_m$, and
		\item\label{item: iterated main thm 2} $\lim_{k\to \infty} H(\mu_k)=H(\mu)$.
	\end{enumerate}
	For each $j=1,\ldots,m-1$, let us denote by $\pi_{j}:B_m\to B_j$ the canonical epimorphism from $B_m$ to $B_j$. Suppose furthermore that
	\begin{enumerate}[(1)]\setcounter{enumi}{2}
		\item\label{item: iterated main thm 3} the $\pi_{1*}\mu$-random walk on $B$ is transient,
		\item\label{item: iterated main thm 4} $h(\pi_{1*}\mu)=0$, 
		\item\label{item: iterated main thm 5}  for every $j=1,\ldots,m-1$ the semigroup generated by $\supp{\pi_{j*}\mu}$ is symmetric, and
		\item\label{item: iterated main thm 6} for every $j=1,\ldots,m-1$ we have $\lim_{k\to \infty}\pesc{\pi_{j*}\mu_k}=\pesc{\pi_{j*}\mu}$.
	\end{enumerate} Then $\lim_{k\to \infty}h(\mu_k)=h(\mu)$.
\end{thm}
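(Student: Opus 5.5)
By Proposition \ref{prop: upper semicontinuous entropy}, we already have $\limsup_{k\to\infty}h(\mu_k)\le h(\mu)$. The whole task is therefore the lower bound $\liminf_{k\to\infty}h(\mu_k)\ge h(\mu)$. I would argue by induction on $m$, with the case $m=2$ being exactly Theorem \ref{thm: continuity asymptotic entropy wreath prods}. A naive induction does not close. The inductive step would view $B_m=A_{m-1}\wr B_{m-1}$ and apply Theorem \ref{thm: continuity asymptotic entropy wreath prods} with base $B_{m-1}$. Hypotheses (3), (5), (6) of that theorem then come from (\ref{item: iterated main thm 3}) (transience of $\pi_{1*}\mu$ implies transience of $\pi_{m-1*}\mu$, since transience passes to extensions), (\ref{item: iterated main thm 5}) and (\ref{item: iterated main thm 6}). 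But hypothesis (4), namely $h(\pi_{m-1*}\mu)=0$, fails in general. Indeed, $B_{m-1}$ is itself a wreath product over a transient base, so it typically carries positive entropy. The plan is therefore to open up the proof of \cite[Theorem 7.1]{Silva2025} and replace the vanishing of the base entropy by continuity of the base entropy, which is exactly what the induction hypothesis supplies.

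Concretely, I would recall the structure of the lower bound in \cite[Theorem 7.1]{Silva2025}. One conditions on the base trajectory $\{\pi(w_i)\}_{i\le n}$ and on the lamp configuration, and uses the chain rule of Lemma \ref{lem: basic properties entropy}(\ref{item: entropy 1}):
\[
H(w_n)\;\ge\; H(w_n\mid \pi(w_0),\ldots,\pi(w_n)) \;\ge\; \ldots
\]
The lamp values at sites visited finitely often, which is a positive-density set with density governed by $\pesc{\pi_*\mu}$ through the range, contribute an entropy that is continuous in $k$. The quantity actually needed is a formula of the type
\[
h(\mu)=h(\pi_*\mu)+\Phi\big(\mu,\pesc{\pi_*\mu}\big),
\]
or at least an inequality $h(\mu_k)\ge h(\pi_*\mu_k)+\Phi_k-o(1)$. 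Here $\Phi$ is the contribution of the eventual lamp configuration given the base path, and in \cite{Silva2025} it is controlled by finite-time entropies (Lemma \ref{lem: convolutions entropy convergence}) together with escape probabilities. I would establish this generalized wreath-product statement first: if the base measures satisfy $h(\pi_*\mu_k)\to h(\pi_*\mu)$, together with (3), (5), (6) at the top level, then $h(\mu_k)\to h(\mu)$. The transience and symmetry assumptions are there to ensure that the lamps at sites left forever are determined by the tail and are asymptotically independent of the base entropy contribution. The key point to verify is that the argument in \cite{Silva2025} only uses $h(\pi_*\mu)=0$ to discard the base term, and that in the lower bound one can instead keep $H(\pi(w_n))$ via $H(w_n)\ge H(\pi(w_n))+H(w_n\mid \pi\text{-path})-H(\pi\text{-path}\mid \pi(w_n))$. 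This last correction term is exactly where zero base entropy was used: the entropy of the base path given its endpoint must be sublinear. I would handle it by bounding it by $n\,h$-type quantities computed at the lowest level $B_1$, where $h(\pi_{1*}\mu)=0$ holds by (\ref{item: iterated main thm 4}).

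With this lemma in hand, the induction runs as follows. By Lemma \ref{lem: entropy to projections}, the projections $\pi_{m-1*}\mu_k$ satisfy (\ref{item: iterated main thm 1}) and (\ref{item: iterated main thm 2}) on $B_{m-1}$. Hypotheses (\ref{item: iterated main thm 3})–(\ref{item: iterated main thm 6}) restrict to the shorter tower, since the canonical maps factor as $B_m\to B_{m-1}\to B_j$. So the induction hypothesis gives $h(\pi_{m-1*}\mu_k)\to h(\pi_{m-1*}\mu)$, and the generalized lemma finishes the proof. The main obstacle I expect is the correction term $H(\text{base path}\mid\text{base endpoint})$ when the base $B_{m-1}$ has positive entropy. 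To control it, I would try first to condition only on the $B_1$-projection of the path, which has sublinear conditional entropy by (\ref{item: iterated main thm 4}), and to treat all lamp layers $A_1,\ldots,A_{m-1}$ simultaneously. Each layer's lamp configuration at sites of $B_j$ is visited finitely often thanks to transience, with density controlled by $\pesc{\pi_{j*}\mu}$ via hypothesis (\ref{item: iterated main thm 6}). This yields a layer-by-layer lower bound whose pieces are each continuous in $k$.
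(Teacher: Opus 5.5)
The architecture you propose is sound: upper semicontinuity gives the $\limsup$, and you then induct on $m$ through a one-level statement in which $h(\pi_*\mu)=0$ is replaced by $h(\pi_*\mu_k)\to h(\pi_*\mu)$. But that one-level statement carries the entire content of the theorem, and you never prove it. You defer it to a reconstruction of \cite[Theorem 7.1]{Silva2025} built on a decomposition $h(\mu)=h(\pi_*\mu)+\Phi$. There $\Phi$ is never defined, and its lower semicontinuity in $k$, which is precisely what has to be proved, is never addressed.

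The mechanism you single out is also incorrect. The full base path given its endpoint satisfies
\[
H\big(\pi(w_0),\ldots,\pi(w_n)\mid \pi(w_n)\big)=nH(\pi_*\mu)-H\big((\pi_*\mu)^{*n}\big),
\]
which is linear in $n$ even when $h(\pi_*\mu)=0$, unless $\pi_*\mu$ is a Dirac mass. So zero base entropy cannot make it sublinear, either at level $B_{m-1}$ or at level $B_1$ as your last paragraph proposes. Moreover, that correction term is not needed in the inequality you wrote. The bound $H(w_n)\ge H(\pi(w_n))+H(w_n\mid \pi(w_0),\ldots,\pi(w_n))$ holds directly, because $H(w_n\mid\pi(w_n))\ge H(w_n\mid \pi(w_0),\ldots,\pi(w_n))$. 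What is really missing is a lower bound on the lamp term that is uniform in $k$, and nothing in the proposal supplies it.

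The missing idea is to work with the coarse trajectories $\mathcal{P}_n^{t_0}$, using an upper bound on their entropy conditioned on the endpoint that passes from $A\wr B$ down to $B$. This is Lemma \ref{lem: starting point for iterated wreath products}, which uses neither transience nor vanishing of the base entropy. The paper iterates it down the tower in Proposition \ref{prop: iterated wreath main prop}, using $H_{\mu_k}(\,\cdot\mid w_n)\le H_{\mu_k}(\,\cdot\mid \pi_j(w_n))$ at each stage. At the bottom it invokes Lemma \ref{lem: coarse trajectory has small entropy}, the only place where $h(\pi_{1*}\mu)=0$ enters; that lemma gives a bound $\varepsilon n$ only for $t_0$-coarse trajectories with $t_0$ large, uniformly in $k$. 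The lower bound then follows from $\lfloor n/L\rfloor H_{\mu_k}(w_L)=H_{\mu_k}(\mathcal{P}_n^L)\le H_{\mu_k}(\mathcal{P}_n^L\mid w_n)+H_{\mu_k}(w_n)$ with $L=N_1\cdots N_{m-1}t_0$, together with Lemma \ref{lem: convolutions entropy convergence}.

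Your induction on continuity would in fact also close once you have Lemma \ref{lem: starting point for iterated wreath products}. For the base walk one has
\[
H_{\pi_*\mu_k}\big(\mathcal{P}_n^{t_0}(B)\mid \pi(w_n)\big)\le n\Big(\tfrac{1}{t_0}H\big((\pi_*\mu_k)^{*t_0}\big)-h(\pi_*\mu_k)\Big)+t_0H(\pi_*\mu_k).
\]
Plug this into that lemma and let $n\to\infty$. Then let $k\to\infty$, using $h(\pi_*\mu_k)\to h(\pi_*\mu)$ and $H(\mu^{*L})/L\ge h(\mu)$. Finally let $N,t_0\to\infty$; this gives exactly your one-level statement. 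But some conditional coarse-trajectory estimate of this kind is indispensable, and a decomposition of $h$ cannot replace it.
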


In what follows we will denote a sample path of a random walk on $A\wr B$ as
\[ 
w_n=(\varphi_n,X_n), \text{ where }\varphi_n:B\to A \text{ and }X_n\in B, \text{ for }n\ge 0.
\]
Let us denote the independent increments of the random walk by $g_i=(f_i,Y_i)$, $i\ge 1$, so that
\[
(\varphi_n,X_n)=(\varphi_{n-1},X_{n-1})\cdot (f_n,Y_n).
\]

\subsection{The coarse trajectory}
We begin by introducing the notion of the $t_0$-coarse trajectory of a random walk on a group. For a fixed $t_0\ge 1$. Intuitively, it consists in recording the group element that is hit by the random walk every $t_0$ steps.
\begin{defn}\label{def: coarse trajectory} Let $\mu$ be a probability measure on a group $G$, and let $t_0\ge 1$. Recall that we denote by $\{w_n\}_{n\ge 0}$ a sample path of the random walk on $G$. We define the \emph{$t_0$-coarse trajectory at instant $n$ on the group $G$} as the ordered tuple
	\begin{equation*}
		\mathcal{P}_n^{t_0}(G)=\left(w_{t_0},w_{2t_0},\ldots, w_{\left\lfloor n/t_0\right \rfloor t_0}\right).
	\end{equation*}
\end{defn}

\begin{lem}[{\cite[Lemma 5.2]{Silva2025}}]\label{lem: coarse trajectory has small entropy} Let $\mu$ be a probability measure on a countable group $G$ with $H(\mu)<\infty$ and $h(\mu)=0$. Consider a sequence $\{\mu_k\}_{k\ge 1}$ of probability measures on $G$ with $H(\mu_k)<\infty$ for all $k\ge 1$, such that $\lim_{k\to \infty}\mu_k(g)=\mu(g)$ for each $g\in G$ and that $\lim_{k\to \infty}H(\mu_k)= H(\mu)$. Then for every $\varepsilon>0$, there exist $K,T\ge 1$ such that for all $k\ge K$, $t_0\ge T$ and $n\ge t_0$, we have $H_{\mu_k}(\mathcal{P}_n^{t_0}(G))<\varepsilon n$.
\end{lem}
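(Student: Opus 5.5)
The plan is to bound the entropy of the coarse trajectory by the entropies of its independent increments, and then use the subadditivity of $t\mapsto H(\mu_k^{*t})$ together with Lemma \ref{lem: convolutions entropy convergence} to get uniform control in $k$ and $t_0$.

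\emph{Reduction to increments.} Write $L=\lfloor n/t_0\rfloor$. The tuple $\mathcal{P}_n^{t_0}(G)=(w_{t_0},\ldots,w_{Lt_0})$ is in bijection with the tuple of increments
\[
\bigl(w_{t_0},\, w_{t_0}^{-1}w_{2t_0},\,\ldots,\, w_{(L-1)t_0}^{-1}w_{Lt_0}\bigr).
\]
Under $\P_{\mu_k}$ these increments are independent, and each has law $\mu_k^{*t_0}$. Since entropy is subadditive over joins (which follows from Lemma \ref{lem: basic properties entropy}), we get
\[
H_{\mu_k}\bigl(\mathcal{P}_n^{t_0}(G)\bigr)\le L\,H(\mu_k^{*t_0})\le \frac{n}{t_0}\,H(\mu_k^{*t_0}).
\]
It therefore suffices to find $K,T$ such that $H(\mu_k^{*t_0})\le \varepsilon t_0$ for all $k\ge K$ and all $t_0\ge T$.

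\emph{Choice of a base time.} Since $h(\mu)=0$, we can fix $T_0$ with $H(\mu^{*T_0})<\varepsilon T_0/2$. By Lemma \ref{lem: convolutions entropy convergence} and the hypothesis $H(\mu_k)\to H(\mu)$, there is $K$ such that for all $k\ge K$ we have $H(\mu_k^{*T_0})<\varepsilon T_0/2$ and $H(\mu_k)\le H(\mu)+1$.

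\emph{Uniform bound via subadditivity.} The sequence $t\mapsto H(\mu_k^{*t})$ is subadditive, because $H(\nu*\lambda)\le H(\nu)+H(\lambda)$. Given $t_0$, write $t_0=qT_0+r$ with $0\le r<T_0$. Then
\[
H(\mu_k^{*t_0})\le q\,H(\mu_k^{*T_0})+r\,H(\mu_k)\le \frac{\varepsilon}{2}t_0+T_0\bigl(H(\mu)+1\bigr).
\]
Now choose $T\ge T_0$ with $T_0(H(\mu)+1)\le \varepsilon T/2$. For every $t_0\ge T$ and $k\ge K$ this gives $H(\mu_k^{*t_0})\le\varepsilon t_0$. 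Substituting into the increment bound yields $H_{\mu_k}(\mathcal{P}_n^{t_0}(G))\le\varepsilon n$ for all $n\ge t_0$. Running the argument with $\varepsilon/2$ in place of $\varepsilon$ gives the required strict inequality.

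\emph{Main obstacle.} The only delicate point is uniformity: the bound must hold simultaneously over all $t_0\ge T$ and all $k\ge K$, not just for a single $t_0$. This is exactly what the subadditivity decomposition provides. It needs convergence of the entropy of only one fixed convolution power $\mu_k^{*T_0}$, together with a uniform bound on $H(\mu_k)$ to absorb the remainder term.
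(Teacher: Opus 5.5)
Your proof is correct and complete. The paper gives no proof of this lemma and simply imports it from \cite[Lemma 5.2]{Silva2025}, so there is nothing in the text to compare step by step.

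Your block identity $H_{\mu_k}\bigl(\mathcal{P}_n^{t_0}(G)\bigr)=\lfloor n/t_0\rfloor\, H(\mu_k^{*t_0})$ is the same one the paper uses in the proof of Theorem \ref{thm: iterated wreath products}. Lemma \ref{lem: convolutions entropy convergence} only gives convergence one convolution power at a time. Your subadditive splitting $t_0=qT_0+r$, with the remainder absorbed by $H(\mu_k)\le H(\mu)+1$, is exactly what turns that into the bound needed uniformly over all $t_0\ge T$ and $k\ge K$.
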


In the statement of Lemma \ref{lem: starting point for iterated wreath products} below, there are two coarse trajectories that appear. For a probability measure $\mu$ on the wreath product $A\wr B$ and for $N,t_0\ge 1$, we consider $\mathcal{P}_n^{t_0}(B)$ the $t_0$-coarse trajectory of the induced random walk on the base group $B$, and $\mathcal{P}_n^{Nt_0}(A\wr B)$ the $Nt_0$-coarse trajectory of the $\mu$-random walk on $A\wr B$.

\begin{lem}[{\cite[Lemma 7.3]{Silva2025}}]\label{lem: starting point for iterated wreath products} 
	Let  $\mu$ be a probability measure on $A\wr B$ with $H(\mu)<\infty$, and consider a sequence $\{\mu_k\}_{k\ge 1}$ of probability measure on $A\wr B$ with $H(\mu_k)<\infty$ for all $k\ge 1$. Suppose that 
	\begin{enumerate}
		\item $\lim_{k\to \infty}\mu_k(g)=\mu(g)$ for all $g\in A\wr B$ and
		\item $\lim_{k\to \infty}H(\mu_k)=H(\mu)$.
	\end{enumerate}  Denote by $\pi:A\wr B\to B$ the canonical epimorphism, and let us furthermore suppose that
	\begin{enumerate}[(1)]\setcounter{enumi}{2}
		\item $\langle\supp{\pi_{*}\mu}\rangle_{+}$ is symmetric and
		\item $\lim_{k\to \infty}\pesc{\pi_{*}\mu_k}=\pesc{\pi_{*}\mu}$.
	\end{enumerate}
	Then, for every $\varepsilon>0$ there exist $C\ge 0$ and $K,n_0,T\ge 1$ such that for all $k\ge K$, $t_0\ge T$, $N>n_0$ and $n> Nt_0$ we have
	\[
	H_{\mu_k}\Big(\mathcal{P}_n^{Nt_0}(A\wr B)\Big \vert w_n\Big)\le \varepsilon n +(H(\mu)+1)\frac{nn_0}{N}+H_{\mu_k}\Big(\mathcal{P}_n^{t_0}(B)\Big| w_n\Big)+C.
	\]
\end{lem}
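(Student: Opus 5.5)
The plan is to reconstruct the $Nt_0$-coarse trajectory on $A\wr B$ from three pieces of data: $w_n$, the $t_0$-coarse trajectory $\mathcal{P}_n^{t_0}(B)$ on the base, and a small amount of extra information. By Lemma \ref{lem: basic properties entropy}, items (\ref{item: entropy 1})--(\ref{item: entropy 3}), this reduction gives
\[
H_{\mu_k}\big(\mathcal{P}_n^{Nt_0}(A\wr B)\mid w_n\big)\le H_{\mu_k}\big(\mathcal{P}_n^{t_0}(B)\mid w_n\big)+H_{\mu_k}\big(\mathcal{P}_n^{Nt_0}(A\wr B)\mid w_n,\mathcal{P}_n^{t_0}(B)\big).
\]
So it suffices to bound the last term by $\varepsilon n+(H(\mu)+1)nn_0/N+C$.

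\textbf{Where the uncertainty lies.} For each $1\le j\le \lfloor n/(Nt_0)\rfloor$, the element $w_{jNt_0}=(\varphi_{jNt_0},X_{jNt_0})$ is described as follows.
\begin{itemize}
\item Its base coordinate $X_{jNt_0}$ is already known from $\mathcal{P}_n^{t_0}(B)$.
\item Its lamp configuration $\varphi_{jNt_0}$ coincides with $\varphi_n$ at every site of $B$ that is not visited after time $jNt_0$.
\item It is trivial at every site not visited before time $jNt_0$.
\end{itemize}
Hence the only genuine uncertainty concerns the lamp values on sites visited both before and after time $jNt_0$.

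\textbf{Splitting the overlap sites.} I would split these sites into two kinds.
\begin{enumerate}
\item Sites visited during a window of $n_0$ consecutive $t_0$-blocks around time $jNt_0$. To handle them I reveal the increments $g_i$ in the window $[(jN-n_0)t_0,(jN+n_0)t_0]$, or rather in one-sided windows of length $n_0t_0$. This costs at most about $(H(\mu_k)\,n_0t_0)$ per index $j$, hence at most $(H(\mu)+1)n_0t_0\cdot n/(Nt_0)=(H(\mu)+1)nn_0/N$ in total for $k$ large, using $H(\mu_k)\to H(\mu)$.
\item Sites visited by two $t_0$-blocks that are at least $n_0$ blocks apart. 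These "long-range overlaps" are where transience-type information enters.
\end{enumerate}

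\textbf{Controlling the long-range overlaps.} The key claim is that the expected number of long-range overlap sites is at most $\varepsilon n/(\text{const})$, uniformly in $k\ge K$. I would prove it via ranges:
\begin{itemize}
\item The range of the base walk satisfies $\mathbb{E}R_n\approx \pesc{\pi_*\mu_k}\,n$.
\item The sum of the ranges of $t_0$-blocks, or of blocks of $n_0$ consecutive $t_0$-blocks, is approximately $\pesc{\pi_*\mu_k}\,n$ as well, since $R_{t}/t\to\pesc{\cdot}$.
\item The discrepancy between these two quantities bounds the number of repeated visits across distant blocks.
\item Uniformity in $k$ comes from the assumption $\pesc{\pi_*\mu_k}\to\pesc{\pi_*\mu}$, combined with subadditivity: $\mathbb{E}R_t/t$ decreases to $\pesc{\cdot}$. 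Choosing $T$ with $\mathbb{E}_{\mu}R_t/t$ close to $\pesc{\pi_*\mu}$ for $t\ge T$, and using pointwise convergence at fixed finite time $t$, transfers closeness to $\mu_k$.
\end{itemize}
The symmetry of $\langle\supp{\pi_*\mu}\rangle_+$ is what I expect to need in order to relate the forward and backward escape events, so that overlaps with the past and with the future are both controlled by $\pesc{\cdot}$. This is the step I expect to be the main obstacle.

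\textbf{Encoding the overlap sites.} Given the overlap sites, their positions relative to the known coarse base trajectory and the lamp values there must be encoded. The number of overlap sites is small, so the positions and values cost $o(n)$ entropy plus a constant $C$. To make this quantitative, I would bound the entropy of the lamp value at a site by the entropy of the increments visiting it, or else reveal the full increments at the relevant visits. Each such visit costs at most $H(\mu_k)+1$ in expectation, which gives the $\varepsilon n$ term once $n_0$ and $T$ are chosen large.
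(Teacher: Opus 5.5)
This lemma is not proved in the paper; it is quoted from \cite{Silva2025}, so there is no in-paper argument to compare with step by step. Your outer layer is sound and reproduces the shape of the bound.
\begin{itemize}
\item The chain rule isolates $H_{\mu_k}(\mathcal{P}_n^{t_0}(B)\mid w_n)$.
\item Revealing the $n_0t_0$ increments after each time $jNt_0$ costs at most $(H(\mu)+1)nn_0/N$ once $H(\mu_k)\le H(\mu)+1$.
\item Your uniformity argument for the range is correct. $\mathbb{E}R_t/(t+1)$ is the Ces\`aro mean of the non-increasing no-return probabilities, so closeness at one time $T$ propagates to all $t\ge T$, and hypothesis (4) supplies the lower bound.
\end{itemize}
What remains is $H_{\mu_k}\big(\mathcal{P}_n^{Nt_0}(A\wr B)\mid w_n,\mathcal{P}_n^{t_0}(B),\text{windows}\big)\le\varepsilon n+C$. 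This carries the substance of the lemma, and your sketch of it does not go through.

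\textbf{The uncertainty is misidentified.} Knowing $w_n$ and $\mathcal{P}_n^{t_0}(B)$ does not tell you the base path inside the $t_0$-blocks, so you do not know which sites were visited before or after $jNt_0$. Even at a site touched by a single block, you cannot decide between $\varphi_{jNt_0}(x)=\varphi_n(x)$ and $\varphi_{jNt_0}(x)=e_A$. Attributing the lamps of $\varphi_n$ to the correct $Nt_0$-block is an extra cost. It depends on how the unknown paths of temporally distant $t_0$-blocks interleave, not only on whether they intersect, so a range count cannot bound it.

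The same problem affects your final encoding.
\begin{itemize}
\item The natural encoding of an overlap site relative to the coarse trajectory costs on the order of $H(\pi_*\mu_k^{*r})$ with $r<t_0$, which is unbounded in $t_0$. The range argument only gives an overlap density $\delta$ fixed in advance.
\item Revealing whole bad $t_0$-blocks instead costs $t_0(H(\mu)+1)$ each. The range count bounds the fraction of bad blocks only by roughly $\delta t_0$ at best, which is of order $\sqrt{t_0/n_0}$ for simple random walk on $\Z^3$.
\end{itemize}
So ``few overlap sites, hence cost $o(n)$'' does not follow.

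Two further points in the same step:
\begin{itemize}
\item The range discrepancy counts sites, but a lamp value depends on every increment applied at that site. Bounding visits needs the Green tail $\sum_{r\ge R}\pi_*\mu_k^{*r}(e_B)$. Hypothesis (4) makes this uniformly small only when $\pi_*\mu$ is transient, and transience is not among the lemma's hypotheses.
\item Revealing increments at data-dependent times is not bounded by $H(\mu_k)+1$ per revealed time in general. You need entropy-tightness of $\{\mu_k\}$ to control the selection bias.
\end{itemize}

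\textbf{Lamp supports.} For general $\mu$ with $H(\mu)<\infty$, an increment $(f_i,Y_i)$ changes the lamps on $X_{i-1}\supp{f_i}$. This is an arbitrary finite set at arbitrary distance from $X_{i-1}$. Your statement that $\varphi_{jNt_0}$ agrees with $\varphi_n$ off the sites visited after $jNt_0$ is correct only if `visited' means `lamp modified'. Those sites are not the range of the $\pi_*\mu_k$-walk, so your key claim concerns the wrong set.

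The fix needs a truncation to a finite set of increments with lamp supports in a finite $S\subseteq B$, paying $\varepsilon n$ for the rest via entropy-tightness. You then need uniform-in-$k$ control of revisits to the finite set $SS^{-1}$. This is where symmetry of $\langle\supp{\pi_*\mu}\rangle_+$ naturally enters, not in relating past and future returns: returns to $e_B$ have the same probabilities for $\nu$ and for $g\mapsto\nu(g^{-1})$. To transfer uniform control at $e_B$ to a point $x\neq e_B$, one uses $\pi_*\mu_k^{*(s+r)}(e_B)\ge\pi_*\mu_k^{*s}(x)\,\pi_*\mu_k^{*r}(x^{-1})$ with $r$ chosen so that $\pi_*\mu^{*r}(x^{-1})>0$. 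That choice of $r$ requires $x^{-1}\in\langle\supp{\pi_*\mu}\rangle_+$.
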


The following proposition extends Lemma \ref{lem: starting point for iterated wreath products} to iterated wreath products.
\begin{prop}\label{prop: iterated wreath main prop}
	Let $A_1,A_2,\ldots, A_{m-1}, B$ be countable groups for some $m\ge 2$. Consider the groups $B_1 \coloneqq B$ and $B_{j+1}\coloneqq A_j\wr B_j$, for $j=1,\ldots,m-1$. Let $\mu$ be a probability measure on $B_m$ with $H(\mu)<\infty$, and consider a sequence of probability measures $\{\mu_k\}_{k\ge 1}$ on $B_m$ with $H(\mu_k)<\infty$ for all $k\ge 1$. Suppose that
	\begin{enumerate}[(1)]
		\item $\lim_{k\to \infty }\mu_k(g)=\mu(g)$ for each $g \in B_m$, and
		\item $\lim_{k\to \infty} H(\mu_k)=H(\mu)$.
	\end{enumerate}
	For each $j=1,\ldots,m-1$, let us denote by $\pi_{j}:B_m\to B_j$ the canonical epimorphism from $B_m$ to $B_j$. Suppose furthermore that
	\begin{enumerate}[(1)]\setcounter{enumi}{2}
		\item the $\pi_{1*}\mu$-random walk on $B$ is transient,
		\item $h(\pi_{1*}\mu)=0$, 
		\item   $\langle\supp{\pi_{j*}\mu}\rangle_{+}$ is symmetric for every $j=1,\ldots,m-1$, and
		\item  $\lim_{k\to \infty}\pesc{\pi_{j*}\mu_k}=\pesc{\pi_{j*}\mu}$ for every $j=1,\ldots,m-1$.
	\end{enumerate}	
	Then, for every $\varepsilon>0$ there exist $C\ge 0$, $K,T\ge 1$ and $n_0\ge 1$ such that for all $k\ge K$ and $t_0\ge T$ the following holds.
	Consider any $N_1,N_2,\ldots, N_{m-1}>n_0$ and $n>N_1N_2\cdots N_{m-1}t_0$. Then 
	
	\[
	H_{\mu_k}\Big(\mathcal{P}_n^{N_1N_2\cdots N_{m-1}t_0}(B_m)\Big \vert w_n\Big)\le \varepsilon n +(H(\mu)+1)\sum_{j=1}^{m-1}\frac{n n_0}{N_j}+C.
	\]
\end{prop}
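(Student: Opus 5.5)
Induction on $m$, applying Lemma \ref{lem: starting point for iterated wreath products} once per layer: at each step we peel off the top wreath factor and pay $\varepsilon n + (H(\mu)+1)nn_0/N_j$. For the final step we use Lemma \ref{lem: coarse trajectory has small entropy} on the base group $B$, which is where $h(\pi_{1*}\mu)=0$ enters.

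\emph{Case $m=2$.} Here $B_2=A_1\wr B$ and $\pi_1=\pi$. Hypotheses (1), (2), (5), (6) are exactly those of Lemma \ref{lem: starting point for iterated wreath products}, so with $N_1>n_0$ we get
\[
H_{\mu_k}\Big(\mathcal{P}_n^{N_1t_0}(B_2)\Big|w_n\Big)\le \varepsilon n+(H(\mu)+1)\frac{nn_0}{N_1}+H_{\mu_k}\Big(\mathcal{P}_n^{t_0}(B)\Big|w_n\Big)+C .
\]
By Lemma \ref{lem: entropy to projections}, $\pi_{1*}\mu_k\to\pi_{1*}\mu$ pointwise and in entropy. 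Since $h(\pi_{1*}\mu)=0$, Lemma \ref{lem: coarse trajectory has small entropy} gives
\[
H_{\mu_k}\big(\mathcal{P}_n^{t_0}(B)\big|w_n\big)\le H_{\pi_{1*}\mu_k}\big(\mathcal{P}_n^{t_0}(B)\big)<\varepsilon n
\]
for $k$ and $t_0$ large. Indeed, the coarse trajectory on $B$ is a function of the projected path, so its law under $\mu_k$ equals its law under $\pi_{1*}\mu_k$.

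\emph{Inductive step.} Apply Lemma \ref{lem: starting point for iterated wreath products} to $B_m=A_{m-1}\wr B_{m-1}$, with coarse step $t_0'=N_1\cdots N_{m-2}t_0$ and $N=N_{m-1}$. The hypotheses are the convergence assumptions (1)–(2) together with (5)–(6) for $j=m-1$. This bounds $H_{\mu_k}(\mathcal{P}_n^{N_{m-1}t_0'}(B_m)\mid w_n)$ by
\[
\varepsilon n+(H(\mu)+1)\frac{nn_0}{N_{m-1}}+H_{\mu_k}\Big(\mathcal{P}_n^{t_0'}(B_{m-1})\Big|w_n\Big)+C .
\]
By Lemma \ref{lem: basic properties entropy}(\ref{item: entropy 3}), the last term is at most $H_{\mu_k}(\mathcal{P}_n^{t_0'}(B_{m-1})\mid \pi_{m-1}(w_n))$. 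This quantity depends only on the projected walk with step law $\pi_{m-1*}\mu_k$. Lemma \ref{lem: entropy to projections} transfers hypotheses (1)–(2) to these projected measures. Hypotheses (3)–(6) for $B_{m-1}$ are the subset $j\le m-2$, because the maps $B_{m-1}\to B_j$ compose with $\pi_{m-1}$ to give $\pi_j$. The induction hypothesis therefore applies to $B_{m-1}$, and adding the two bounds gives the claim with $\varepsilon$ replaced by $m\varepsilon$.

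\emph{Main obstacle: uniformity of constants.} Lemma \ref{lem: starting point for iterated wreath products} provides $K,n_0,T$ for its $\varepsilon$. We need to take $n_0$ as the maximum over levels, $K$ as the maximum, and $T$ large enough at every level. The difficulty is that in the inductive step the lemma is applied with $t_0'=N_1\cdots N_{m-2}t_0$ in place of $t_0$. This is harmless, because the lemma's conclusion holds for all $t_0'\ge T$ and $t_0'\ge t_0$.

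It must also be checked that the lemma's conclusion holds for every $N>n_0$ once $n_0$ is enlarged. This is fine, since enlarging $n_0$ only restricts the range of $N$ and only increases the term $(H(\mu)+1)nn_0/N$ in the bound. Finally, the condition $n>N_{m-1}t_0'$ needed to apply the lemma is exactly the hypothesis $n>N_1\cdots N_{m-1}t_0$.
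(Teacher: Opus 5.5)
Your proposal is correct and follows essentially the same route as the paper. The paper also argues by induction on $m$. Its base case combines Lemma \ref{lem: starting point for iterated wreath products} with Lemma \ref{lem: coarse trajectory has small entropy} applied to $\pi_{1*}\mu_k$. Its inductive step applies Lemma \ref{lem: starting point for iterated wreath products} to $A_{m-1}\wr B_{m-1}$ with coarse step $N_1\cdots N_{m-2}t_0$, passes from conditioning on $w_n$ to conditioning on $\pi_{m-1}(w_n)$, and invokes the induction hypothesis for the projected measures $\pi_{(m-1)*}\mu_k$. The one detail you leave implicit is that the induction hypothesis gives the coefficient $H(\pi_{(m-1)*}\mu)+1$. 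This is at most $H(\mu)+1$ by the data processing inequality, which the paper uses when merging the two sums.
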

\begin{proof}
	We do the proof by induction. Let us first consider the base case $m=2$. Using Lemma \ref{lem: starting point for iterated wreath products}, we have that for every $\varepsilon>0$, there are $C_1\ge 0$ and $K_1,n_0,T_1\ge 1$ such that for all $k\ge K_1$, $t_0\ge T_1$, $N_1>n_0$ and $n\ge N_1t_0$ we have
	\[
	H_{\mu_k}\left( \mathcal{P}_n^{N_1t_0}(B_2)\Big| w_n \right)\le \frac{\varepsilon}{2}n +(H(\mu)+1)\frac{nn_0}{N}+H_{\mu_k}(\mathcal{P}_n^{t_0}(B)\mid w_n)+C_1.
	\]
	In addition, we are assuming that $h(\pi_{1*}\mu)=0$, so that Lemma \ref{lem: coarse trajectory has small entropy} shows that for each $\varepsilon>0$ there is $K_2,T_2\ge 1$ such that for all $k\ge K_2$, $t_0\ge T$ and $n\ge t_0$ we have $H_{\mu_k}(\mathcal{P}_n^{t_0}(B))<\frac{\varepsilon}{2}n$. Furthermore, note that 
	\(
	H_{\mu_k}\left(\mathcal{P}_n^{t_0}(B)\mid w_n\right)\le 	H_{\mu_k}\left(\mathcal{P}_n^{t_0}(B)\right).
	\)
	
	Then, for each $\varepsilon>0$ we can choose $K\coloneqq\max\{K_1,K_2\}\ge 1$, $T\coloneqq\max\{T_1,T_2\}\ge 1$, and $n_0\ge 1$ such that for all $k\ge K$, $t_0\ge T$, $N_1>n_0$ and $n> N_1t_0$ we have
	\[
	H_{\mu_k}\left( \mathcal{P}_n^{N_1t_0}(B_2)\Big| w_n \right)\le \varepsilon n +(H(\mu)+1)\frac{nn_0}{N_1}+C.
	\]
	This is precisely the statement of the proposition in the case $m=2$.
	
	Let us now consider an arbitrary value $m\ge 3$. Denote $\nu_k\coloneqq \pi_{(m-1)*}\mu_{k}$, $k\ge 1$, and $\nu\coloneqq \pi_{(m-1)*}\mu$. It follows from Lemma \ref{lem: entropy to projections} that the probability measures $\{\nu_k\}_{k\ge 1}$, $\nu$ on $B_{m-1}$ satisfy the hypotheses of the current proposition. Thus, using the inductive hypothesis, we obtain that for every $\varepsilon>0$, there is $C_1\ge 0$, $K_1,T_1\ge T$, and $n_1\ge 1$ such that for all $k\ge K_1$, $t_0\ge T$, any $N_1,\ldots, N_{m-2}>n_1$ and $n>N_1\cdots N_{m-2}t_0$ we have
	\begin{equation}\label{eq: inductive 1}
		H_{\nu_k}\left(\mathcal{P}_n^{N_1N_2\cdots N_{m-2}t_0}(B_{m-1})\mid \pi_{m-1}(w_n)\right)\le \frac{\varepsilon}{2}n+(H(\nu)+1)\sum_{j=1}^{m-2}\frac{nn_1}{N_j}+C_1.
	\end{equation}
	Additionally, from  Lemma \ref{lem: starting point for iterated wreath products} we get that for every $\varepsilon>0$, there are $C_2\ge 0$ and $K\ge K_1$, $n_0\ge n_1$ and $T\ge T_1$ such that for all $k\ge K$, $t_1\ge T$, $N_{m-1}>n_0$ and $n>N_{m-1}t_1$ we have
	
	\begin{equation}\label{eq: inductive 2}
		H_{\mu_{k}}\left(\mathcal{P}_n^{N_{m-1}t_1}(B_{m})\mid w_n\right)\le \frac{\varepsilon}{2}n+(H(\mu)+1)\frac{nn_0}{N_{m-1}}+H_{\mu_k}(\mathcal{P}_n^{t_1}(B_{m-1})\mid w_n)+C_2.
	\end{equation}

	By replacing in Equation \eqref{eq: inductive 2} with $t_1=N_1\cdots N_{m-2}t_0$ and then using Equation \eqref{eq: inductive 1} we obtain that for each $k\ge K$, $t_0 \ge T$, $N_1,N_2,\ldots, N_{m-1}>n_0$ and $n>N_1\cdots N_{m-1}t_0$ we have
	\begin{align*}
		H_{\mu_{k}}\left(\mathcal{P}_n^{N_1\cdots N_{m-1}t_0}(B_{m})\mid w_n\right)&\le \frac{\varepsilon}{2}n+(H(\mu)+1)\frac{nn_0}{N_{m-1}}+\\ &\hspace{15pt }+H_{\mu_k}(\mathcal{P}_n^{N_1\cdots N_{m-2}t_0}(B_{m-1})\mid w_n)+C_2\\
		&\le \frac{\varepsilon}{2}n+(H(\mu)+1)\frac{nn_0}{N_{m-1}}+\\ &\hspace{15pt }+H_{\nu_k}(\mathcal{P}_n^{N_1\cdots N_{m-2}t_0}(B_{m-1})\mid \pi_{m-1}(w_n))+C_2\\
		&\le \varepsilon n+(H(\mu)+1)\frac{nn_0}{N_{m-1}}+\\ &\hspace{15pt }+C_2+(H(\nu)+1)\sum_{j=1}^{m-2}\frac{nn_1}{N_j}+C_1\\
		&\le \varepsilon n+(H(\mu)+1)\frac{nn_0}{N_{m-1}}+\\ &\hspace{15pt }+(H(\mu)+1)\sum_{j=1}^{m-2}\frac{nn_0}{N_j}+C_1+C_2\\
		&= \varepsilon n+(H(\mu)+1)\sum_{j=1}^{m-1}\frac{nn_0}{N_j}+C,
	\end{align*}
	where $C=C_1+C_2$. This finishes the induction, and concludes the proof of the proposition.
\end{proof}

\begin{proof}[The proof of Theorem \ref{thm: iterated wreath products}]
	Let $\varepsilon>0$. By Proposition \ref{prop: iterated wreath main prop} we can choose constants $C\ge0$, $K,T\ge1$ and $n_0\ge1$ such that for all $k\ge K$, $t_0\ge T$, and all $N_1,\ldots,N_{m-1}>n_0$, whenever $n>N_1\cdots N_{m-1}t_0$ we have
	\[
	H_{\mu_k}\!\left(P^{N_1\cdots N_{m-1}t_0}_n(B_m)\,\middle|\, w_n\right)
	\le
	\varepsilon n + (H(\mu)+1)\sum_{j=1}^{m-1}\frac{n n_0}{N_j}+C .
	\]
	
	We now relate the entropy of the coarse trajectory to the entropy of the random walk at a fixed time. Using the definition of the coarse trajectory and the subadditivity of entropy, we obtain
	\begin{align*}
		\left( \frac{n}{N_1\cdots N_{m-1}t_0} -1\right) H_{\mu_k}\left(w_{N_1\cdots N_{m-1}t_0}\right) 	&\le \left\lfloor \frac{n}{N_1\cdots N_{m-1}t_0} \right\rfloor  H_{\mu_k}\left(w_{N_1\cdots N_{m-1}t_0}\right) \\
		&= H_{\mu_k}\left(\mathcal{P}_n^{N_1\cdots N_{m-1}t_0}(B_m)\right)\\
		&\le  H_{\mu_k}\left(\mathcal{P}_n^{N_1\cdots N_{m-1}t_0}(B_m)\mid w_n\right)+H_{\mu_k}(w_n)\\
		&\le  \varepsilon n +(H(\mu)+1)\sum_{j=1}^{m-1}\frac{n n_0}{N_j}+C+H_{\mu_k}(w_n).
	\end{align*}
	
	This implies that 
	\[
	\frac{1}{n}H_{\mu_k}(w_n)\ge 	\left( \frac{1}{N_1\cdots N_{m-1}t_0} -\frac{1}{n}\right) H_{\mu_k}\left(w_{N_1\cdots N_{m-1}t_0}\right) -\varepsilon -(H(\mu)+1)\sum_{j=1}^{m-1}\frac{n_0}{N_j}-\frac{C}{n}.
	\]
	
	By taking $n\to \infty$ and then taking the limit inferior as $k\to \infty$, we obtain
	\[
	\liminf_{k\to \infty} h(\mu_k)\ge  \frac{1}{N_1\cdots N_{m-1}t_0} H_{\mu}\left(w_{N_1\cdots N_{m-1}t_0}\right) -\varepsilon-(H(\mu)+1)\sum_{j=1}^{m-1}\frac{n_0}{N_j}, 
	\]
	where we used Lemma \ref{lem: convolutions entropy convergence} to have the inequality $H_{\mu_k}(w_{N_1\cdots N_{m-1}t_0})\xrightarrow[k\to \infty]{}H_{\mu}(w_{N_1\cdots N_{m-1}t_0})$. Next, we take the limit $N_1\to \infty, N_2\to \infty,\ldots, N_{m-1}\to \infty$ and we get
	\[
	\liminf_{k\to \infty} h(\mu_k)\ge h(\mu) -\varepsilon. 
	\]
	Finally, since this equality holds for all $\varepsilon>0$, we conclude that $\liminf_{k\to \infty} h(\mu_k)\ge h(\mu). $ Together with Proposition \ref{prop: upper semicontinuous entropy}, we conclude that $\lim_{k\to \infty}h(\mu_k)=h(\mu)$. 
	
\end{proof}

\subsection{Free solvable groups}\label{subsection: free solvable}
Let $F_d$ be a free group of rank $d\ge 1$, and consider a normal subgroup $N\lhd F_d$. We now recall the Magnus embedding of $F_d/[N,N]$ into the wreath product $\Z^d\wr F_d/N$. A more detailed exposition on the Magnus embedding can be found in \cite[Subsections 2.2 -- 2.5]{MyasnikovRomankovUshakovVershik2010} and \cite[Section 2]{SaloffCosteZheng2015}.

Let $\Z(F_d/N)$ and $\Z(F_d)$ be the group rings with integer coefficients of $F_d/N$ and $F_d$, respectively. Let us denote by $\pi:\Z(F_d)\to \Z(F_d/N)$ the linear extension of the canonical quotient map $F_d\to F_d/N$. Let $T$ be a free left $\Z(F_d/N)$-module of rank $d$ with basis $\{t_1,\ldots,t_d\}$, and consider the group of matrices
\[
M(F_d/N)\coloneqq \left\{ \begin{pmatrix}
	g & t \\ 0 & 1
\end{pmatrix}  \mid g\in F_d/N \text{ and }t\in T \right\},
\]
where the group operation is matrix multiplication. One can verify with a direct computation that $T$ is isomorphic to the direct sum $\bigoplus_{F_d/N}\Z^d$, and that $M(F_d/N)$ is isomorphic to $\Z^d\wr F_d/N$. The following theorem describes the Magnus embedding of $F_d/[N,N]$ into $M(F_d/N)\cong \Z^d\wr F_d/N$.
\begin{thm}[{\cite{Magnus1939}}]\label{thm: original formulation Magnus embedding} Let $F_d$ be a free group of rank $d\ge 1$, and consider $X=\{x_1,\ldots,x_d\}$ a free basis of $F_d$. Let $N\lhd F_d$ be a normal subgroup and denote by $\pi:F_d\to F_d/N$ the quotient map. Then the homomorphism $\phi:F_d\to M(F_d/N)$ defined by
	\begin{equation*}
		\phi(x_i)\coloneqq \begin{pmatrix}
			\pi(x_i) & t_i \\
			0 & 1
		\end{pmatrix}, \ \ \ \text{ for }i=1,\ldots,d,
	\end{equation*}
	satisfies $\mathrm{ker}(\phi)=[N,N]$. Hence, $\phi$ induces a monomorphism $\widetilde{\phi}:F_d/[N,N]\to M(F_d/N)$.
\end{thm}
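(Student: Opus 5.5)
The plan is the classical Fox-calculus and covering-graph argument: first compute $\phi(w)$ explicitly, then show the two inclusions $[N,N]\subseteq\ker\phi$ and $\ker\phi\subseteq[N,N]$.

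\emph{Step 1: explicit formula.} Since $F_d$ is free on $X$, $\phi$ is a well-defined homomorphism. For $w\in F_d$ we have $\phi(w)=\begin{pmatrix}\pi(w)&\tau(w)\\0&1\end{pmatrix}$ for some $\tau(w)\in T$. Multiplying matrices gives the cocycle rule $\tau(uv)=\tau(u)+\pi(u)\tau(v)$, with $\tau(x_i)=t_i$. The Fox derivatives $\partial/\partial x_i:\Z(F_d)\to\Z(F_d)$ are characterized by $\partial(uv)/\partial x_i=\partial u/\partial x_i+u\,\partial v/\partial x_i$ and $\partial x_j/\partial x_i=\delta_{ij}$. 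Comparing the two rules, induction on word length gives
\[
\tau(w)=\sum_{i=1}^d \pi\!\left(\frac{\partial w}{\partial x_i}\right)t_i .
\]

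\emph{Step 2: $[N,N]\subseteq\ker\phi$.} Take $n\in N$. Then $\pi(n)=1$, so $\phi(n)=\begin{pmatrix}1&\tau(n)\\0&1\end{pmatrix}$. The set of such matrices is a subgroup isomorphic to the additive group $T$, hence abelian. Therefore $\phi([n,n'])=[\phi(n),\phi(n')]=1$ for all $n,n'\in N$, and $[N,N]\subseteq\ker\phi$.

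\emph{Step 3: $\ker\phi\subseteq[N,N]$.} This is the main obstacle. I would argue topologically. Let $\Gamma$ be the Cayley graph of $F_d/N$ with respect to $\pi(X)$, with one edge from $g$ to $g\pi(x_i)$ for each $g\in F_d/N$ and each $i$. This is the covering of the rose $R_d$ corresponding to $N$, so $\pi_1(\Gamma,1)\cong N$, identifying $n\in N$ with the closed lift of the word $n$ starting at $1$. Hence $N/[N,N]\cong H_1(\Gamma;\Z)$.

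The cellular chain group $C_1(\Gamma)$ is the free $\Z(F_d/N)$-module on the edges $e_i$ at the base vertex, with $g\,e_i$ the edge from $g$ to $g\pi(x_i)$. Identify $C_1(\Gamma)$ with $T$ via $g e_i\mapsto g t_i$. By the cocycle rule, the $1$-chain traced by the lift of a word $w$ is exactly $\tau(w)$: reading a letter $x_i$ at current position $\pi(u)$ contributes $+\pi(u)t_i$, and reading $x_i^{-1}$ contributes $-\pi(ux_i^{-1})t_i$. Thus for $n\in N$, $\tau(n)$ is the image of the loop class in $Z_1(\Gamma)$. Because $\Gamma$ is a graph (there are no $2$-cells), $H_1(\Gamma)=Z_1(\Gamma)\subseteq C_1(\Gamma)$, so the Hurewicz map $N/[N,N]\to C_1(\Gamma)\cong T$, $n\mapsto\tau(n)$, is injective.

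Now take $w\in\ker\phi$. Then $\pi(w)=1$, so $w\in N$, and $\tau(w)=0$. By injectivity, $w\in[N,N]$.

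\emph{Conclusion and main check.} Steps 2 and 3 give $\ker\phi=[N,N]$, so $\phi$ factors through an injective homomorphism $\widetilde\phi:F_d/[N,N]\to M(F_d/N)$. The step I would verify most carefully is the bookkeeping in Step 3: that the chain traced by a word agrees with the Fox-derivative expression, including the signs and the left-module conventions for inverse letters.
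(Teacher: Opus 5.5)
Your proof is correct. The paper gives no proof of this statement: it quotes it from \cite{Magnus1939} and refers to the expositions in \cite{MyasnikovRomankovUshakovVershik2010} and \cite{SaloffCosteZheng2015}, so there is no in-paper argument to match.

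\emph{How your route compares.} You identify $N/[N,N]$ with $H_1$ of the Cayley graph $\Gamma$ of $F_d/N$, viewed as the covering of the rose corresponding to $N$. You then use that $H_1(\Gamma)=Z_1(\Gamma)\subseteq C_1(\Gamma)\cong T$ because $\Gamma$ has no $2$-cells. This is the standard topological proof. It also makes explicit the ``flows on the Cayley graph of $F_d/N$'' picture behind the geometric interpretation of free solvable groups (Vershik; Myasnikov--Romankov--Ushakov--Vershik) mentioned in the introduction. Algebraically, it is exactness of the relation sequence $0\to N/[N,N]\to \Z(F_d/N)^d\to \Z(F_d/N)\to \Z\to 0$, which is just the cellular chain complex of $\Gamma$.

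\emph{The sign check you flagged works out.} From $\tau(x_ix_i^{-1})=0$ you get $\tau(x_i^{-1})=-\pi(x_i)^{-1}t_i$. So reading $x_i^{-1}$ at position $\pi(u)$ adds $-\pi(ux_i^{-1})t_i$, which is exactly the backward traversal of the edge $\pi(ux_i^{-1})e_i$ running from $\pi(ux_i^{-1})$ to $\pi(u)$.

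\emph{Two minor remarks.} First, Step 1 (the Fox-derivative formula) is never used: Step 3 needs only the cocycle rule $\tau(uv)=\tau(u)+\pi(u)\tau(v)$. If you keep Step 1, phrase it as uniqueness of a cocycle on a free group given its values on the basis, rather than induction on words. Second, $\Gamma$ must be a multigraph, with loops when $\pi(x_i)=1$ and parallel edges when $\pi(x_i)=\pi(x_j)$, for it to be the covering of the rose. You already handle this by taking one edge for each pair $(g,i)$.
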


\begin{proof}[The proof of Theorem \ref{thm: free solvable}]
	
	Consider any non-degenerate probability measure $\mu$ on $S_{d,m}$ with $H(\mu)<\infty$, and let $\{\mu_k\}_{k\ge 1}$ be a sequence of probability measures with $H(\mu_k)<\infty$, for each $k\ge 1$. Let us suppose that $\lim_{k\to \infty}\mu_k(g)=\mu(g)$ for each $g\in S_{d,m}$ and that $H(\mu_{k})\xrightarrow[k\to \infty]{}H(\mu)$. Recall that in the statement of Theorem \ref{thm: free solvable} we assume that $d\ge 3$.
	
	Using Theorem \ref{thm: original formulation Magnus embedding} we can realize $S_{d,m}$ as a subgroup of $\Z^d\wr \left(\Z^d\wr\left(\cdots \Z^d\wr \left(\Z^d\wr \Z^d\right)\cdots\right)\right)$, the group formed as the iterated wreath product of $m$ copies of $\Z^d$. Since asymptotic entropy and the hypotheses of Theorem  \ref{thm: iterated wreath products} are preserved under the natural projections to the wreath product factors, it suffices to verify that the induced measures satisfy the hypotheses of Theorem \ref{thm: iterated wreath products}.
	
	For each $j=1,\ldots, m-1$ consider the group $B_{j+1}\coloneqq \Z^d\wr B_{j}$, where we define $B_1=\Z^d$. Denote by $\pi_j:S_{d,m}\to B_j$ the canonical projection to the group $B_j$ for each $j=1,\ldots,m-1$. Then we have the following.
	\begin{enumerate}
		\item We are assuming that $\mu$ is non-degenerate, and hence $\supp{\pi_{1*}\mu}$ generates $\Z^d$. Since $d\ge 3$, the $\pi_{1*}\mu$-random walk on $\Z^d$ is transient.
		\item We also have that $h(\pi_{1*}\mu)=0$ as a combination of the fact that random walks on abelian groups have trivial Poisson boundary with the entropy criterion.
		\item For each $j=1,\ldots,m-1$ we have that $\langle \supp{\pi_{j*}\mu}\rangle_{+} =B_j$. Since $d\ge 3$, the group $B_1=\Z^d$ has growth at least cubic, and the rest of the groups $B_j$, for $j=2,\ldots, m-1$, have exponential growth. We conclude from Theorem \ref{thm: continuity of range} that $\lim_{k\to \infty}\pesc{\pi_{j*}\mu_k}=\pesc{\pi_{j*}\mu}$.
	\end{enumerate}
	From this, we can directly apply Theorem \ref{thm: iterated wreath products} and conclude that $\lim_{k\to \infty}h(\mu_k)=h(\mu)$.
\end{proof}
\section{Base groups with linear or quadratic growth}\label{section: linear quadratic}
In this section we turn our attention to the case where the base group of the wreath product has growth either linear or quadratic, which are more delicate. Indeed, we have already mentioned that the escape probability is in general not continuous among finitely supported probability measures on the infinite dihedral group \cite[Lemma 3]{Erschler2011} (see also Example \ref{exmp: D infty and BS 11}), and this leads to analogous counterexamples to continuity of asymptotic entropy on wreath products of the form $A\wr D_{\infty}$ \cite[Lemma 4]{Erschler2011} (see also Example \ref{example: discontinuity on wreath products over Dinfty and BS11}). In contrast, here we prove that one does have continuity among probability measures with a fixed common support when the base group is either $\Z$ or $\Z^2$.
\subsection{Continuity of the escape probability on $\Z$ and $\Z^2$}

The asymptotic entropy on a wreath product is tightly connected with the escape probability on its base group. Therefore, in order to handle base groups of linear or quadratic growth, we first study the continuity of the escape probability on $\mathbb Z$ and $\mathbb Z^2$.

\begin{prop}\label{prop: Zd for strongly finitely supported rw has continuous escape proba}
	Let $d\in \{1,2\}$ and consider a probability measure $\mu$ and a sequence of probability measures $\{\mu_k\}_{k\ge 1}$ on $\Z^d$ such that there exists a finite subset $F\subseteq \Z^d$ with $\supp{\mu}\subseteq F$ and $\supp{\mu_k}\subseteq F$, for all $k\ge 1$. Suppose that $\lim_{k\to \infty}\mu_k(g)=\mu(g)$ for all $g\in \Z^d$. Then $\lim_{k\to \infty}\pesc{\mu_k}=\pesc{\mu}$.
\end{prop}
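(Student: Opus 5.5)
The plan is to use the classical identity $\pesc{\nu}=1/G_\nu(0,0)$, where $G_\nu(0,0)=\sum_{n\ge 0}\nu^{*n}(0)\in[1,\infty]$ is the Green function at the origin, and to split into two cases according to whether the $\mu$-random walk is recurrent or transient. In both cases one fact is used repeatedly: since all supports lie in the fixed finite set $F$, for each fixed $n$ the measures $\mu_k^{*n}$ are supported in the finite set $F^n$. Because $\mu_k\to\mu$ pointwise, this gives $\mu_k^{*n}(x)\to\mu^{*n}(x)$ for every $x$. More generally, any event depending on the first $N$ increments has $\P_{\mu_k}$-probability converging to its $\P_\mu$-probability, since it is a finite sum of products of the numbers $\mu_k(g)$, $g\in F$.

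\emph{Upper semicontinuity (and the recurrent case).} For every probability measure $\nu$ and every $N\ge 1$,
\[
\pesc{\nu}\le \P_\nu(w_n\neq 0 \text{ for } n=1,\ldots,N).
\]
By the remark above, the right-hand side for $\nu=\mu_k$ converges to the same quantity for $\mu$. Letting $k\to\infty$ and then $N\to\infty$ gives $\limsup_k \pesc{\mu_k}\le \pesc{\mu}$. In particular, if the $\mu$-walk is recurrent, then $\pesc{\mu}=0$ and we are done.

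\emph{The transient case.} Suppose the $\mu$-walk is transient. The subgroup generated by $\supp{\mu}$ is isomorphic to $\Z^r$ with $r\le d\le 2$, and $\mu$ has finite support, hence finite second moment. By the Chung--Fuchs theorem, a mean-zero such walk is recurrent. Hence the mean $m(\mu)=\sum_x x\,\mu(x)$ is nonzero. Since the supports lie in $F$, we have $m(\mu_k)\to m(\mu)$.

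Choose a unit vector $u$ with $\langle m(\mu),u\rangle=\delta>0$, and take $k$ large enough that $\langle m(\mu_k),u\rangle\ge\delta/2$. Put $D=\max_{x\in F}|x|$. The projection $\langle w_n,u\rangle$ is a sum of $n$ i.i.d.\ variables bounded by $D$ with mean at least $\delta/2$. Hoeffding's inequality then gives, uniformly in large $k$,
\[
\mu_k^{*n}(0)\le \P_{\mu_k}(\langle w_n,u\rangle\le 0)\le e^{-cn},\qquad c=\frac{\delta^2}{8D^2}.
\]
By dominated convergence applied to the series, together with termwise convergence $\mu_k^{*n}(0)\to\mu^{*n}(0)$, we get $G_{\mu_k}(0,0)\to G_\mu(0,0)<\infty$. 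Therefore $\pesc{\mu_k}=1/G_{\mu_k}(0,0)\to\pesc{\mu}$.

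The main point needing care is the dichotomy in the transient case: that transience on $\Z$ or $\Z^2$ for a finitely supported measure forces nonzero drift, even when the support generates a proper (possibly rank-one) subgroup. This is exactly where $d\le 2$ and the common finite support enter. Chung--Fuchs applied to the lattice generated by $\supp{\mu}$ handles it. The uniform exponential bound is what prevents mass of the Green function from escaping to infinity along the sequence, which is precisely the failure mode in the known counterexamples on $D_\infty$.
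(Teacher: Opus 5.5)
Your proposal is correct and follows essentially the same route as the paper. Both arguments use the Green function identity $\sum_{n\ge 0}\mu^{*n}(0)=1/\pesc{\mu}$, settle the recurrent (equivalently, by Chung--Fuchs, zero-mean) case by upper semicontinuity, and treat the nonzero-mean case with a Hoeffding bound that is uniform in large $k$ plus dominated convergence. The differences are minor: the paper cites upper semicontinuity from Erschler, while you prove it directly. Your projection onto a unit vector $u$ with $\langle m(\mu),u\rangle>0$ is a useful detail, since the paper applies its one-dimensional Hoeffding lemma without spelling out the reduction for $d=2$.
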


We will use the following result, which is a consequence of Hoeffding's inequality \cite[Theorem 2]{Hoeffding1963}.

\begin{lem}\label{lem: hoeffding}
	Let $\mu$ be a finitely supported probability measure on $\Z$,  and consider $a,b\in \Z$ such that every $x\in \supp{\mu}$ satisfies $a\le x\le b$. Let us denote $\ell\coloneqq \sum_{x\in \Z}x\mu(x)$, and let $\{w_n\}_{n\ge 0}$ be the $\mu$-random walk on $\Z$. Then for all $n\ge 0$ we have
	
	\begin{equation}\label{eq: Hoeffding}
		\P_{\mu}\left(w_n=0\right)\le 2\exp\left( - \frac{2n\ell^2}{(b-a)^2} \right).
	\end{equation}
\end{lem}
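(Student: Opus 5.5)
The plan is to deduce the bound from Hoeffding's inequality applied to the $n$ independent increments of the walk. Write $w_n=g_1+\cdots+g_n$, where the $g_i$ are i.i.d. with law $\mu$. They take values in $[a,b]$ and have mean $\ell$. Hoeffding's inequality \cite[Theorem 2]{Hoeffding1963} states that for every $t>0$,
\[
\P\bigl(|w_n-n\ell|\ge t\bigr)\le 2\exp\left(-\frac{2t^2}{n(b-a)^2}\right).
\]
The event $\{w_n=0\}$ means that the sum deviates from its mean by exactly $n|\ell|$. So the first step is to observe that $\{w_n=0\}\subseteq\{|w_n-n\ell|\ge n|\ell|\}$.

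We then take $t=n|\ell|$. When $\ell\neq 0$ and $n\ge 1$ we have $t>0$, and the exponent becomes $-2n^2\ell^2/(n(b-a)^2)=-2n\ell^2/(b-a)^2$, which is exactly \eqref{eq: Hoeffding}.

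It remains to treat the degenerate cases, which I expect to be the only slightly delicate point. If $\ell=0$ or $n=0$, the right-hand side of \eqref{eq: Hoeffding} equals $2$, and the inequality holds trivially because probabilities are at most $1$. If $a=b$, then $\mu$ is a Dirac mass at $a$. In that case either $\ell=a=0$, which is the trivial case already handled, or $\ell\neq 0$ and the right-hand side is read as $2\exp(-\infty)=0$. This reading is consistent, since then $w_n=na\neq 0$ deterministically for $n\ge 1$. So we may assume $a<b$, where Hoeffding's inequality applies directly, and the lemma follows.
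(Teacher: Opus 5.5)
Your proposal is correct and is essentially the paper's proof: both use the inclusion $\{w_n=0\}\subseteq\{|w_n-n\ell|\ge n|\ell|\}$ and apply Hoeffding's inequality at deviation level $n|\ell|$. Your version is slightly more careful than the paper's. The paper takes $t=n\ell$, which tacitly assumes $\ell>0$, and it does not treat the degenerate cases $\ell=0$, $n=0$, $a=b$ that you handle.
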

\begin{proof}
	Let $\mu$ be a finitely supported probability measure on $\Z$ as in the statement of the lemma. Then, Hoeffding's inequality \cite[Theorem 2]{Hoeffding1963} (see also \cite[Proposition A.6.1]{Lalley2023}) states that for all $n\ge 0$ and $t>0$ we have
	\[
	\P_{\mu}\left(  \left| w_n-n\ell \right|\ge t \right)\le 2\exp\left( - \frac{2t^2}{n(b-a)^2} \right).
	\]
	
	In particular, evaluating the above in $t=n\ell$ we get
	\begin{equation*}
		\P_{\mu}\left(w_n=0\right)\le\P_{\mu}\left(  \left| w_n-n\ell \right|\ge n\ell \right)\le 2\exp\left( - \frac{2n\ell^2}{(b-a)^2} \right),
	\end{equation*}
	which is the inequality we wanted.
\end{proof}
In the proof of Proposition \ref{prop: Zd for strongly finitely supported rw has continuous escape proba} we will use the following well-known equality that relates the escape probability of the $\mu$-random walk with the expected number of returns to the origin; see, e.g., \cite[Lemma 1.13 (a)]{Woess2000}.
\begin{lem}\label{lem: expected number of visits is 1 over 1-return prob}
	Let $\mu$ be a probability measure on a countable group $G$. Then
	\[	\sum_{n= 0}^{\infty}\mu^{*n}(e_G)=\frac{1}{\pesc{\mu}}.\]
\end{lem}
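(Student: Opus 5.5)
This is the classical identity for the expected number of visits to the identity, and I would prove it by a renewal (first-return) decomposition. Let $\{w_n\}_{n\ge0}$ be the $\mu$-random walk and let $V\coloneqq \sum_{n\ge 0}\mathds{1}_{\{w_n=e_G\}}$ be the total number of visits to $e_G$, counting time $0$. By Tonelli's theorem,
\[
\mathbb{E}[V]=\sum_{n\ge 0}\P(w_n=e_G)=\sum_{n\ge0}\mu^{*n}(e_G).
\]
So it suffices to show $\mathbb{E}[V]=1/\pesc{\mu}$, with the convention $1/0=\infty$.

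Set $q\coloneqq 1-\pesc{\mu}=\P(w_n=e_G \text{ for some } n\ge1)$ and let $\tau\coloneqq\inf\{n\ge1: w_n=e_G\}$ be the first return time. The key step is to show that
\[
\P(V\ge j+1)=q^{j}\quad\text{for every } j\ge 0.
\]
I would prove this by induction on $j$ using the strong Markov property at the $j$-th return time $\tau_j$. Concretely, on $\{\tau_j<\infty\}$ the process $(w_{\tau_j}^{-1}w_{\tau_j+n})_{n\ge0}=(w_{\tau_j+n})_{n\ge 0}$ is again a $\mu$-random walk started at $e_G$, independent of $\mathcal{F}_{\tau_j}$. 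It therefore returns to $e_G$ with probability $q$.

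Then $V$ has a geometric law, and $\mathbb{E}[V]=\sum_{j\ge0}\P(V\ge j+1)=\sum_{j\ge0}q^j$. This equals $1/(1-q)=1/\pesc{\mu}$ when $q<1$. When $q=1$, i.e.\ $\pesc{\mu}=0$, the sum is $\infty$, matching the convention $1/0=\infty$.

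An alternative, purely analytic route would avoid the strong Markov property. Set $f_n\coloneqq \P(\tau=n)$ and $u_n\coloneqq\mu^{*n}(e_G)$. Decomposing according to the first return gives the renewal identity $u_n=\sum_{k=1}^n f_k u_{n-k}$ for $n\ge1$, with $u_0=1$. Summing over $n$ with nonnegative terms yields $U=1+FU$, where $U=\sum u_n$ and $F=\sum f_k=q$. One then has to treat $U=\infty$ separately: this case forces $q=1$, which can be seen by taking generating functions $U(s)=1/(1-F(s))$ for $s<1$ and letting $s\uparrow1$.

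The only point needing care is this case distinction, together with the degenerate situation. I expect it to be the main (mild) obstacle. The generating-function limit handles both cases uniformly, so I would present that version.
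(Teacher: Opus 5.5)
Your proposal is correct, and both routes work: the geometric law $\P(V\ge j+1)=q^j$ via the strong Markov property at successive return times, and the renewal identity $u_n=\sum_{k=1}^n f_k u_{n-k}$ combined with $U(s)=1/(1-F(s))$ for $s<1$ and monotone convergence as $s\uparrow 1$, which handles $\pesc{\mu}=0$ and $\pesc{\mu}>0$ uniformly. The paper gives no proof of its own and only cites Woess's Lemma 1.13(a), whose standard proof is essentially your generating-function renewal argument.
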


\begin{proof}[Proof of Proposition \ref{prop: Zd for strongly finitely supported rw has continuous escape proba}]
	Consider $d=1$ or $d=2$. Let $\{\mu_k\}_{k\ge 1}$, $F\subseteq \Z^d$ and $\mu$ be as in the statement of the proposition. For each $k\ge 1$ we consider the mean of $\mu_k$ on $\R^d$, given by $
	\ell_k=\sum_{x\in F}x \mu_k(x)$, as well as $\ell=\sum_{x\in F} x\mu(x)$. Since $F$ is finite, we have the convergence $\ell_k\xrightarrow[k\to \infty]{}\ell$.
	
	If $\ell=0$, then the $\mu$-random walk on $\Z^d$ is recurrent and hence $\pesc{\mu}=0$. In this case, it follows from \cite[Lemma 1 (i)]{Erschler2011} that $\limsup_{k\ge \infty} \pesc{\mu_k}\le \pesc{\mu}=0$, so that $\lim_{k\to \infty} \pesc{\mu_k}=0=\pesc{\mu}$. For the rest of the proof, let us consider the case where $\ell \neq 0.$
	
	Then, it follows from Lemma \ref{lem: hoeffding} together with the continuity of the exponential function that there are $C>0$ and $K\ge 1$ such that for each $k\ge K$ we have 
	\begin{equation}\label{eq: exp upper bound}
		\mu_k^{*n}(0)=\P_{\mu_k}(w_n=0)\le C\exp\left(-Cn\right).
	\end{equation}
	From Equation \eqref{eq: exp upper bound} we have that the sequences $\{\mu_k^{*n}(0)\}_{n\ge 0}$ are bounded above by an integrable sequence, uniformly over all $k\ge K$. From this, we obtain the convergence
	\[
	\sum_{n\ge 0}\mu_k^{*n}(0)\xrightarrow[k\to \infty]{}\sum_{n\ge 0}\mu^{*n}(0),
	\]
	and hence that $\pesc{\mu_k}\xrightarrow[k\to \infty]{}\pesc{\mu}.$
\end{proof}

\begin{rem}
	One should note that Proposition \ref{prop: Zd for strongly finitely supported rw has continuous escape proba} is not true if one does not suppose that all the probability measures in the sequence have a common fixed finite support. For example, consider for each $k\ge 1$ the probability measure $\mu_k$ on $\Z$ defined by
	\[
	\mu_k(1)=\frac{3}{4}\frac{2k}{1+2k}, \mu_k(-1)=\frac{1}{4}\frac{2k}{1+2k}, \text{ and }\mu_k(-k)=\frac{1}{1+2k}.
	\]
	Then the $\mu_k$-random walk on $\Z$ is recurrent, for each $k\ge 1$. Indeed, this follows from the fact that 
	\[
	\sum_{x\in \Z}x\mu_k(x)=\frac{3}{4}\frac{2k}{1+2k}-\frac{1}{4}\frac{2k}{1+2k}-\frac{k}{1+2k}=0.
	\]
	
	Despite the above, the sequence $\{\mu_k\}_{k\ge 1}$ converges pointwise to a probability measure $\mu$ such that the $\mu$-random walk on $\Z$ is transient.
\end{rem}

As we have already mentioned, if one considers groups of linear or quadratic growth other than $\Z$ or $\Z^2$, the analogous generalization of Proposition \ref{prop: Zd for strongly finitely supported rw has continuous escape proba} does not hold.

\begin{exmp}\label{exmp: D infty and BS 11}
	Consider the infinite dihedral group $D_{\infty}=\langle a,b\mid a^2=b^2=1\rangle$. It is shown in \cite[Lemma 4.3]{Gilch2008} that for every adapted probability measure $\mu$ on $D_{\infty}$ with a finite first moment, the $\mu$-random walk on $D_{\infty}$ is recurrent. In particular, this is the case for any probability measure of the form
	\[
	\mu_k=\left(1-\frac{1}{k}\right)\left(p\delta_{ab}+(1-p)\delta_{ba}\right)+ \frac{1}{k}\delta_{a},
	\]
	with $k\ge 1$ and $0\le p\le 1$. Since these measures are recurrent, we have $\pesc{\mu_k}=0$ for all $k\ge 1$. Additionally, we have that $\mu_k\xrightarrow[k\to \infty]{}\mu$ pointwise, where $\mu=p\delta_{ab}+(1-p)\delta_{ba}$ is supported on the subgroup $\langle ab\rangle \cong \Z$. If $p\neq 1/2$ then $\pesc{\mu}>0$, and hence this provides an example for the discontinuity of the escape probability among probability measures supported on a fixed finite subset of $D_{\infty}.$
	
	One obtains a similar example on the Baumslag-Solitar group $\mathrm{BS}(1,-1)=\langle a,b\mid bab^{-1}=a^{-1}\rangle$. This group has quadratic growth and is torsion-free. Here, one can obtain a sequence of probability measures $\mu_k$, $k\ge 1$, supported on a fixed finite subset of $\mathrm{BS}(1,-1)$ that define a recurrent random walk, and which converge pointwise to a probability measure (supported on $\Z^2$) that defines a transient random walk. For example, one can consider
	\[
	\mu_k=\frac{1}{3}\left(1-\frac{1}{k}\right)\left(\delta_{b^2}+\delta_{b^{-2}}+p\delta_{a}+(1-p)\delta_{a^{-1}}\right)+ \frac{1}{2k}\left(\delta_b+\delta_{b^{-1}}\right), \text{for }k\ge 1.
	\]
\end{exmp}

\begin{rem}\label{rem: finite index}
	Let $H$ be a finite-index subgroup of a countable group $G$, and let $\mu$ be a non-degenerate probability measure on $G$. Denote by 
	\[
	\tau \coloneqq \min\{n\ge 1 : w_n \in H\}
	\]
	the first return time to $H$ of the $\mu$-random walk on $G$. Then
	\[
	\mu_H(h) \coloneqq \mathbb{P}_{\mu}(w_{\tau}=h), \qquad h\in H,
	\]
	defines a probability measure on $H$. It holds that the $\mu$-random walk on $G$ is transient if and only if the $\mu_H$-random walk on $H$ is transient. Moreover, if $\mu$ is finitely supported, then $\mu_H$ has an exponential tail (see, e.g., \cite[Theorem 4.9.3]{Yadin2024}). Therefore, by considering the probability measures from Example \ref{exmp: D infty and BS 11}, one can also construct examples of discontinuity of the escape probability on $\mathbb{Z}$ and $\mathbb{Z}^2$. Although these measures are not finitely supported, they still have exponential tails.
\end{rem}

\subsection{Continuity of asymptotic entropy on wreath products over $\Z$ or $\Z^2$}
We now explain how Proposition \ref{prop: Zd for strongly finitely supported rw has continuous escape proba} implies the continuity of asymptotic entropy on wreath products when the base group is $\Z$ or $\Z^2$ (see Example \ref{example: discontinuity on wreath products over Dinfty and BS11}).
\begin{thm}\label{thm: wreath prods 12} Let $A$ be a countable group, $d\in \{1,2\}$, and consider the wreath product $A\wr \Z^d$. Let $F\subseteq A\wr \Z^d$ be a non-empty finite subset. Consider probability measures $\mu$ and $\{\mu_k\}_{k\ge 1}$ on $A\wr \Z^d$ with $\supp{\mu_k}\subseteq F$ for all $k\ge 1$. Suppose that the random walk on $\Z^d$ induced by $\mu$ is transient and that $\lim_{k\to \infty}\mu_k(g)=\mu(g)$ for every $g\in A\wr \Z^d$. Then $\lim_{k\to \infty} h(\mu_k)=h(\mu)$.
\end{thm}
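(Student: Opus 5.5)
The plan is to apply Theorem \ref{thm: continuity asymptotic entropy wreath prods} with $B=\Z^d$ and check its six hypotheses.

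\emph{Hypotheses (1) and (2).} Pointwise convergence is assumed. Since $\mu_k\to\mu$ pointwise, $\supp{\mu}\subseteq F$ as well. Lemma \ref{lem: continuitiy of finite time entropy} then gives $H(\mu_k)\to H(\mu)$. All these entropies are finite, bounded by $\log|F|$.

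\emph{Hypothesis (3).} Transience of the $\pi_*\mu$-random walk is assumed.

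\emph{Hypothesis (4).} The group $\Z^d$ is abelian, so $\pi_*\mu$ has trivial Poisson boundary, and the entropy criterion gives $h(\pi_*\mu)=0$.

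\emph{Hypothesis (6).} Let $\pi:A\wr\Z^d\to\Z^d$ be the canonical projection. The measures $\pi_*\mu_k$ and $\pi_*\mu$ are supported in the finite set $\pi(F)$. They converge pointwise: $\pi_*\mu_k(x)=\sum_{g\in F\cap\pi^{-1}(x)}\mu_k(g)$ is a finite sum. Proposition \ref{prop: Zd for strongly finitely supported rw has continuous escape proba} then gives $\pesc{\pi_*\mu_k}\to\pesc{\pi_*\mu}$. This is where the common finite support is essential, and it replaces the cubic-growth Theorem \ref{thm: continuity of range}.

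\emph{Hypothesis (5).} This is the main obstacle: Theorem \ref{thm: continuity asymptotic entropy wreath prods} asks that $\langle\supp{\pi_*\mu}\rangle_+$ be symmetric, and the present statement does not assume this. In dimension $d\le 2$, transience typically comes from a nonzero drift $\ell$ (see the proof of Proposition \ref{prop: Zd for strongly finitely supported rw has continuous escape proba}). If $\ell\ne 0$, the semigroup generated by the support lies in a half-space and is not symmetric. So the theorem cannot be applied verbatim. I would handle this in one of two ways.

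\emph{Option (a): reprove the key lemma.} Revisit the proof of Lemma \ref{lem: starting point for iterated wreath products} (i.e.\ \cite[Lemma 7.3]{Silva2025}), where symmetry is presumably used to control how many times the base walk revisits sites and hence the entropy of the lamp configuration given the coarse trajectory. In the drift case $\ell\neq 0$ on $\Z^d$ with finite support, the Hoeffding bound of Lemma \ref{lem: hoeffding} gives exponentially small return probabilities uniformly in $k\ge K$. This uniform exponential bound should substitute for symmetry. Concretely, the walk visits each site only during a bounded time window, uniformly in $k$, and the lamps written at times far apart in a coarse block are essentially decoupled. The remainder of the proof of Theorem \ref{thm: iterated wreath products} (coarse trajectory, Lemma \ref{lem: coarse trajectory has small entropy}, then Lemma \ref{lem: convolutions entropy convergence}, then upper semicontinuity from Proposition \ref{prop: upper semicontinuous entropy}) then goes through unchanged.

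\emph{Option (b): reduce to the symmetric case.} If $\langle\supp{\pi_*\mu}\rangle_+$ happens to be a group, then $\ell=0$ forces recurrence. That contradicts the transience hypothesis, so only the nonsymmetric, drifting case needs the argument of option (a).

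I expect verifying that the symmetry hypothesis can be replaced by this uniform exponential return bound to be the main technical step.
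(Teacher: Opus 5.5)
Your verification of hypotheses (1)--(4) and (6) of Theorem~\ref{thm: continuity asymptotic entropy wreath prods} is exactly the paper's proof. Lemma~\ref{lem: continuitiy of finite time entropy}, after noting $\supp{\mu}\subseteq F$, gives $H(\mu_k)\to H(\mu)$. Choquet--Deny plus the entropy criterion give $h(\pi_*\mu)=0$. Proposition~\ref{prop: Zd for strongly finitely supported rw has continuous escape proba}, applied on $\pi(F)$, gives convergence of the escape probabilities. The paper then invokes Theorem~\ref{thm: continuity asymptotic entropy wreath prods} directly and says nothing about hypothesis (5). You are right that (5) does not follow from the stated assumptions (take $\pi_*\mu=\delta_1$ on $\Z$). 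So this argument, yours and the paper's alike, covers exactly the case where $\langle\supp{\pi_*\mu}\rangle_+$ is a group, for instance whenever $\supp{\mu}$ generates $A\wr\Z^d$ as a semigroup.

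Your treatment of (5), however, rests on a false claim and leaves the remaining case unproved. Condition (5) depends only on the support, not on the weights. For example, $\pi_*\mu=\frac34\delta_1+\frac14\delta_{-1}$ has drift $\ell=\frac12\neq 0$ and is transient, yet $\langle\{1,-1\}\rangle_+=\Z$. So nonzero drift does not confine the semigroup to a half-space, and option (b) is wrong. Symmetry of $\langle\supp{\pi_*\mu}\rangle_+$ is fully compatible with transience. That case is not vacuous: it is exactly where Theorem~\ref{thm: continuity asymptotic entropy wreath prods} applies verbatim and your checks already finish the proof.

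The case that genuinely escapes the theorem is when some $v\in\R^d$ satisfies $\langle s,v\rangle\ge 0$ for all $s\in\supp{\pi_*\mu}$, with strict inequality for some $s$; this does force $\langle\ell,v\rangle>0$. For that case option (a) is a heuristic, not an argument. You have not identified where symmetry enters the proof of Lemma~\ref{lem: starting point for iterated wreath products}. Moreover, Lemma~\ref{lem: hoeffding} yields exponentially small return probabilities, not visits confined to a bounded time window uniformly in $k$. Closing the gap would require re-proving that lemma, and hence Theorem~\ref{thm: continuity asymptotic entropy wreath prods}, without (5), which you do not do. As written, your proposal establishes the statement only under the extra hypothesis that $\langle\supp{\pi_*\mu}\rangle_+$ is symmetric.
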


\begin{proof}		
	Thanks to Lemma \ref{lem: continuitiy of finite time entropy} we have $\lim_{k\to \infty} H(\mu_k)=H(\mu)$. Denote by $\pi:A\wr \Z^d\to \Z^d$ the canonical epimorphism. Thanks to Proposition \ref{prop: Zd for strongly finitely supported rw has continuous escape proba} we also have $\lim_{k\to \infty}\pesc{\pi_{*}\mu_k}=\pesc{\pi_{*}\mu}$. Additionally, the Choquet-Deny theorem together with the entropy criterion guarantee that $h(\pi_{*}\mu)=0$. Hence, we can apply Theorem \ref{thm: continuity asymptotic entropy wreath prods} and conclude that $\lim_{k\to \infty}h(\mu_k)=h(\mu)$.
\end{proof}
\begin{rem}
	If one furthermore assumes that $A$ is a hyper-FC-central group, then the statement of Theorem \ref{thm: wreath prods 12} remains true even if the induced random walk on $\Z^d$ is recurrent. Indeed, in such a case the $\mu$-random walk on $A\wr \Z^d$ has a trivial Poisson boundary (see e.g.\ \cite[Proposition 4.9]{LyonsPeres2021}), and hence $h(\mu)=0$. Since asymptotic entropy is upper-semicontinuous (Proposition \ref{prop: upper semicontinuous entropy}) and $h(\mu_k)\ge 0$ for all $k\ge 1$, we conclude that $\lim_{k\to \infty} h(\mu_k)=0=h(\mu)$.
\end{rem}
\begin{thm}\label{thm: rank2 free solvable}
	Consider the free solvable group $S_{2,m}$ of rank $2$ and derived length $m\ge 2$. Let $F\subseteq S_{2,m}$ be a non-empty finite subset. Consider probability measures $\mu$ and $\{\mu_k\}_{k\ge 1}$ on $S_{2,m}$ with $\supp{\mu_k}\subseteq F$ for all $k\ge 1$. Suppose that $\lim_{k\to \infty}\mu_k(g)=\mu(g)$ for every $S_{2,m}$. Then $\lim_{k\to \infty} h(\mu_k)=h(\mu)$.
\end{thm}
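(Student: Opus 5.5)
The plan is to split according to whether the random walk induced on the abelianization $\Z^2=S_{2,m}/[S_{2,m},S_{2,m}]$ is recurrent or transient, and in the transient case to run the same argument as in the proof of Theorem \ref{thm: free solvable}. The only change is that Theorem \ref{thm: continuity of range} is replaced, on the bottom layer, by Proposition \ref{prop: Zd for strongly finitely supported rw has continuous escape proba}. Throughout, $\supp{\mu}\subseteq F$ holds automatically because $\mu$ is the pointwise limit of measures supported on $F$. Lemma \ref{lem: continuitiy of finite time entropy} then gives $H(\mu_k)\to H(\mu)$, and Lemma \ref{lem: entropy to projections} transfers both the pointwise convergence and the convergence of entropies to every projection $\pi_{j*}$.

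\emph{Recurrent case.} Suppose the $\pi_{1*}\mu$-random walk on $\Z^2$ is recurrent. I would show that $h(\mu)=0$; upper semicontinuity (Proposition \ref{prop: upper semicontinuous entropy}) together with $h(\mu_k)\ge 0$ then gives $h(\mu_k)\to 0=h(\mu)$. For the vanishing of $h(\mu)$, the intended input is the description of the Poisson boundary of finitely supported random walks on free solvable groups (\cite[Corollary 1.8]{FrischSilva2024}, \cite[Section 6]{LyonsPeres2021}, \cite{Erschler2011}). These results say that the boundary is generated by the limiting configurations of the Magnus cocycle over the layers below. When the abelianized walk is recurrent, I would check inductively along the Magnus embedding $S_{2,m}\hookrightarrow \Z^2\wr S_{2,m-1}$ that no such limit configuration is non-trivial. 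At the bottom this is the Lyons--Peres statement for $\Z^2\wr\Z^2$ over a recurrent base. Hence the boundary is trivial, and $h(\mu)=0$ by the entropy criterion. If no off-the-shelf statement covers exactly this case, this is where I would have to work directly with the Magnus cocycle.

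\emph{Transient case.} Here I would embed $S_{2,m}$ into the iterated wreath product $B_m$ with $B_1=\Z^2$ and $B_{j+1}=\Z^2\wr B_j$, via Theorem \ref{thm: original formulation Magnus embedding}, exactly as in the proof of Theorem \ref{thm: free solvable}, and then verify the hypotheses of Theorem \ref{thm: iterated wreath products}.
\begin{itemize}
\item Hypotheses (1) and (2) were obtained above.
\item Hypothesis (3) is the case assumption.
\item Hypothesis (4) follows from the Choquet--Deny theorem and the entropy criterion.
\item Hypothesis (6) for $j=1$ is Proposition \ref{prop: Zd for strongly finitely supported rw has continuous escape proba}, applied to the measures $\pi_{1*}\mu_k$, which are supported on the fixed finite set $\pi_1(F)$.
\item Hypothesis (6) for $j\ge 2$ comes from Theorem \ref{thm: continuity of range}, provided $\langle\supp{\pi_{j*}\mu}\rangle_+$ contains a subgroup of at least cubic growth. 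When $\pi_{1*}\mu$ generates a rank-$2$ subgroup, the group generated in $B_j$ has exponential growth, in the same way as in the proof of Theorem \ref{thm: free solvable}. The remaining sub-cases would need to be reduced to lower-rank situations.
\end{itemize}

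The main obstacle is hypothesis (5), the symmetry of $\langle\supp{\pi_{j*}\mu}\rangle_+$, because here $\mu$ is not assumed non-degenerate. My first attempt would be to replace $S_{2,m}$ by the subgroup $\Gamma=\langle F\rangle$, or by $\langle\supp{\mu}\rangle$. On this subgroup, a semigroup generated by a finite set in a group of this kind should contain the inverses of its generators as soon as the walk does not escape in a ``one-sided'' way. If symmetry genuinely fails, for instance when $\supp{\pi_{1*}\mu}$ lies in an open half-plane, I would instead try to show that the relevant step of Theorem \ref{thm: continuity asymptotic entropy wreath prods}, namely the lamp-recording estimate of Lemma \ref{lem: starting point for iterated wreath products}, still holds. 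The idea is that a walk with non-zero drift visits each site of the base only a bounded expected number of times, so the coarse-trajectory bound can be obtained directly from Lemma \ref{lem: hoeffding}-type exponential estimates on returns.
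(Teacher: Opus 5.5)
Your transient case is the paper's argument: embed via Magnus, use Proposition \ref{prop: Zd for strongly finitely supported rw has continuous escape proba} on the bottom layer and Theorem \ref{thm: continuity of range} above it, then apply Theorem \ref{thm: iterated wreath products}.

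\textbf{The gap is in your recurrent case.} You claim $h(\mu)=0$ whenever the walk induced on the abelianization $\Z^2$ is recurrent, and this is false for $m\ge 3$. Your induction along $S_{2,m}\hookrightarrow \Z^2\wr S_{2,m-1}$ would need the induced walk on $S_{2,m-1}$ to be \emph{recurrent}, not merely to have trivial boundary. Recurrence only holds on the bottom layer.

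Already $S_{2,2}\le\Z^2\wr\Z^2$ has exponential growth, so the induced walk on it is transient as soon as its support generates enough, e.g.\ for simple random walk. The Magnus lamps of $S_{2,3}$ over $S_{2,2}$ are the net edge flows of the projected path in the Cayley graph of $S_{2,2}$. By transience these flows stabilize, and the limit is not deterministic. For instance, the net flow across the edge from $e$ to $x_1$ is $1$ if the walk first steps to $x_1$ and never revisits $e$. It is $0$ if the walk first steps to $x_2$ and never revisits $e$. Both events have positive probability.

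This limit is a non-constant shift-invariant function of the trajectory. So for simple random walk on $S_{2,3}$, whose abelianization is recurrent, the Poisson boundary is non-trivial and $h(\mu)>0$. Upper semicontinuity alone therefore cannot conclude, and the ``direct work with the Magnus cocycle'' you anticipate would be an attempt to prove a false statement.

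\textbf{The missing idea is the one the paper uses.} In the recurrent case, keep only the valid part of your argument: the projection $\pi_{2*}\mu$ to $B_2=\Z^2\wr\Z^2$ has trivial Poisson boundary, so $h(\pi_{2*}\mu)=0$. For $m=2$ this is exactly $h(\mu)=0$, and upper semicontinuity finishes.

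For $m\ge 3$, re-base the tower at $\widetilde B_1\coloneqq\Z^2\wr\Z^2$, with $\widetilde B_{j+1}\coloneqq\Z^2\wr\widetilde B_j$, so that $S_{2,m}$ sits in an $(m-1)$-fold iterated wreath product.
\begin{itemize}
\item Hypothesis (4) of Theorem \ref{thm: iterated wreath products} is now $h(\pi_{2*}\mu)=0$.
\item Since $\widetilde B_1$ and all higher layers have exponential growth, transience (3) and continuity of every escape probability (6) follow from Theorem \ref{thm: continuity of range}.
\end{itemize}
So the recurrent case is handled by the same iterated wreath product theorem as the transient one, with the zero-entropy layer moved up one level.

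Your worries about hypothesis (5) and about degenerate supports in (6) are left open in your proposal. The paper does not treat them separately either: it simply asserts the hypotheses hold, tacitly as in the non-degenerate setting of Theorem \ref{thm: free solvable}. So that is not where your route departs from the paper's.
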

\begin{proof}
	Similar to the proof of Theorem \ref{thm: free solvable}, we use Theorem \ref{thm: original formulation Magnus embedding} to realize $S_{2,m}$ as a subgroup of the iterated wreath product of $m$ copies of $\Z^2$. For each $j=1,\ldots,m-1$ define $B_{j+1}\coloneqq \Z^2\wr B_j$, where $B_1\coloneqq \Z^2$, and denote by $\pi_j:S_{2,m}\to B_j$ the canonical projection to $B_j$. 
	Suppose first that the $\pi_{1*}\mu$-random walk on $B_1=\Z^2$ is recurrent. Then the $\pi_{2*}\mu$-random walk on $B_2=\Z^2\wr \Z^2$ has a trivial Poisson boundary and hence $h(\pi_{2*}\mu)=0$ thanks to the entropy criterion. If $m=2$, then the upper-semicontinuity of asymptotic entropy guarantees that $\lim_{k\to \infty}h(\mu_k)=0=h(\mu)$. If $m\ge 3$, then we can re-define $\widetilde{B}_{j+1}\coloneqq \Z^2\wr \widetilde{B}_j$ for $j=1,\ldots, m-2$, where $\widetilde{B}_1\coloneqq \Z^2\wr \Z^2$. Denote by $\widetilde{\pi}_j:S_{2,m}\to \widetilde{B}_j$ the canonical projection to $\widetilde{B}_j$ for every $j=1,\ldots, m-1$. In this case, the group $\widetilde{B}_1$ has exponential growth and hence we have both that the $\widetilde{\pi}_{1*}\mu$-random walk on $\widetilde{B}_1$ is transient, and that $\lim_{k\to \infty}\pesc{\widetilde{\pi}_{1*}\mu_k}=\pesc{\widetilde{\pi}_{1*}\mu}$ thanks to Theorem \ref{thm: continuity of range}. The hypotheses of Theorem \ref{thm: iterated wreath products} are thus verified and we conclude that $\lim_{k\to \infty}h(\mu_k)=h(\mu)$.

	Now suppose that the $\pi_{1*}\mu$-random walk on $B_1=\Z^2$ is transient. Then it follows from Proposition \ref{prop: Zd for strongly finitely supported rw has continuous escape proba} that $\lim_{k\to \infty}\pesc{\widetilde{\pi}_{1*}\mu_k}=\pesc{\widetilde{\pi}_{1*}\mu}$. Again, we verify the hypotheses of Theorem \ref{thm: iterated wreath products} and we conclude that $\lim_{k\to \infty}h(\mu_k)=h(\mu)$.
\end{proof}

\subsection{Examples of discontinuity of asymptotic entropy}\label{subsection: examples of discontinuity of entropy}

We mentioned in the introduction that countable locally finite groups and finitely generated groups of intermediate growth serve as examples where asymptotic entropy is not continuous among infinitely supported probability measures. In this subsection, we explain in more detail examples involving wreath products, where in general asymptotic entropy will be discontinuous even among probability measures supported on a fixed finite subset of the group (Example \ref{example: discontinuity on wreath products over Dinfty and BS11}).

The following class of examples are a generalization of \cite[Lemma 4]{Erschler2011}. 
\begin{exmp}\label{example: discontinuity on wreath products over Dinfty and BS11}
	Using the probability measures mentioned in Example \ref{exmp: D infty and BS 11}, one can construct examples of discontinuity of asymptotic entropy on the groups $\Z/2\Z\wr D_{\infty}$ and $\Z/2\Z\wr \mathrm{BS}(1,-1)$. Indeed, let $G$ be $D_{\infty}$ or $\mathrm{BS}(1,-1)$, and denote by $\{\mu_k\}_{k\ge 0}$ a sequence of probability measures on $G$ that are supported on a fixed finite subset $S\subseteq G$, that define a recurrent random walk, and that converge pointwise to a probability measure $\mu$ on $G$ that defines a transient random walk. Let $A$ be an arbitrary non-trivial hyper-FC-central group, and consider the group $A\wr G$. Consider additionally a finitely supported probability measure $\eta$ on $A$. Then the sequence of probability measures $\nu_k\coloneqq \frac{1}{2}\eta+\frac{1}{2}\mu_k$ is a sequence of probability measures on $G$ that is supported on a fixed finite subset of $G$, and it holds that $h(\nu_k)=0$ for all $k\ge 1$,  $\{\nu_k\}_{k\ge 1}$ converges pointwise to $\nu\coloneqq \frac{1}{2}\eta+\frac{1}{2}\mu$, and $h(\nu)>0$.
	
	Using Remark \ref{rem: finite index}, one can use the above examples to obtain discontinuity of asymptotic entropy among probability measures with an exponential tail on  $\Z/2\Z\wr \Z$ and $\Z/2\Z\wr \Z^2$.
	
\end{exmp}
The above examples are based on finding a sequence of probability measures with zero asymptotic entropy that converges to a probability measure with positive asymptotic entropy. As we show next, taking the direct product of any group from Example \ref{example: discontinuity on wreath products over Dinfty and BS11} with a non-abelian free group yields examples of discontinuity of asymptotic entropy where all involved probability measures have strictly positive asymptotic entropy.

\begin{prop}\label{prop: discontinuity of asymptotic entropy with positive values}
	There is a finitely generated group $G$ and probability measures $\mu,\{\mu_k\}_{k\ge 1}$ on $G$ such that 
	\begin{itemize}
		\item there is a finite subset $F\subseteq G$ such that $\supp{\mu_k}\subseteq F$ for all $k\ge 1$,
		\item  $\mu_k(g)\xrightarrow[k\to \infty]{}\mu(g)$ for all $g\in G$,
		\item $h(\mu_k)>0$ for all $k\ge 1$, and
		\item $\limsup_{k\to \infty}h(\mu_k)<h(\mu)$.
	\end{itemize}
\end{prop}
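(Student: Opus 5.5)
We take $G=G_0\times F_2$, where $G_0$ is one of the groups from Example \ref{example: discontinuity on wreath products over Dinfty and BS11}. Concretely, $G_0=A\wr D_\infty$ with $A$ a non-trivial finite (hence hyper-FC-central) group, say $A=\Z/2\Z$. That example provides finitely supported measures $\nu_k$ on $G_0$, all supported on a fixed finite set $S_0$, with $\nu_k\to\nu$ pointwise, $h(\nu_k)=0$ for all $k$ and $h(\nu)>0$. On $F_2$ we fix the uniform measure $\lambda$ on a free basis and its inverses, so $h(\lambda)>0$. We then define
\[
\mu_k\coloneqq \tfrac12\,\nu_k\otimes\delta_{e}+\tfrac12\,\delta_{e}\otimes\lambda,\qquad \mu\coloneqq \tfrac12\,\nu\otimes\delta_{e}+\tfrac12\,\delta_{e}\otimes\lambda .
\]
These are supported on the finite set $F=(S_0\times\{e\})\cup(\{e\}\times \supp{\lambda})$, and $\mu_k\to\mu$ pointwise. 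Since the two factors commute, the $\mu_k$-random walk at time $n$ equals $(x_{N_n},y_{n-N_n})$. Here $N_n\sim\mathrm{Bin}(n,1/2)$, and $x,y$ are independent $\nu_k$- and $\lambda$-walks, independent of $N_n$.

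The key step is the entropy formula
\[
h(\mu_k)=\tfrac12 h(\nu_k)+\tfrac12 h(\lambda),
\]
and likewise for $\mu$. For the upper bound, $H(w_n)\le H(N_n)+H(x_{N_n}\mid N_n)+H(y_{n-N_n}\mid N_n)$ with $H(N_n)=O(\log n)$. Then $H(x_{N_n}\mid N_n)=\mathbb{E}[H(\nu_k^{*N_n})]$, which is $n\,h(\nu_k)/2+o(n)$, because $H(\nu^{*j})/j\to h(\nu)$ and $N_n/n\to1/2$ with concentration. The lower bound follows from $H(w_n)\ge H(w_n\mid N_n)$, together with the fact that conditionally on $N_n$ the two coordinates are independent, so the conditional entropy splits as the same sum.

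It then follows that $h(\mu_k)=\tfrac12h(\lambda)>0$ for every $k$, while
\[
h(\mu)=\tfrac12 h(\nu)+\tfrac12 h(\lambda)>\tfrac12 h(\lambda)=\limsup_{k\to\infty}h(\mu_k).
\]

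The main obstacle is verifying carefully that Example \ref{example: discontinuity on wreath products over Dinfty and BS11} really gives $h(\nu)>0$ and $h(\nu_k)=0$.
\begin{itemize}
\item For $h(\nu_k)=0$: the projection of $\nu_k$ to $D_\infty$ is recurrent and $A$ is finite, so the Poisson boundary is trivial (as cited from \cite{LyonsPeres2021}), and the entropy criterion gives $h(\nu_k)=0$.
\item For $h(\nu)>0$: the projection $\mu$ to $\langle ab\rangle\cong\Z$ is transient with $p\ne1/2$ and $\eta$ is non-trivial on $A$, so the lamp configuration stabilizes, giving a non-trivial Poisson boundary; the entropy criterion then gives $h(\nu)>0$.
\end{itemize}
If needed, this can be replaced by the explicit classical computation of \cite[Lemma 4]{Erschler2011}, which is exactly this situation.
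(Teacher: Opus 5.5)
Your proof is correct, but it uses a different measure on the product group than the paper does.

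\textbf{The paper's route.} It takes $G=F_2\times L$ and the product measures $\eta\otimes\mu_k$ and $\eta\otimes\mu$, where $\eta$ is uniform on a symmetric free generating set. Convolution powers then factor exactly, $(\eta\otimes\mu_k)^{*n}=\eta^{*n}\otimes\mu_k^{*n}$. Hence $H\big((\eta\otimes\mu_k)^{*n}\big)=H(\eta^{*n})+H(\mu_k^{*n})$, and $h(\eta\otimes\mu_k)=h(\eta)+h(\mu_k)=h(\eta)$ with no error terms at all.

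\textbf{Your route.} The mixture $\tfrac12\nu_k\otimes\delta_e+\tfrac12\delta_e\otimes\lambda$ forces you through the binomial time change. You use $|H(w_n)-H(w_n\mid N_n)|\le H(N_n)=O(\log n)$ and conditional independence of the two coordinates given $N_n$. You then need the averaging step $\mathbb{E}[H(\nu_k^{*N_n})]=\tfrac n2 h(\nu_k)+o(n)$. That step holds, and concentration of $N_n$ is not even needed. Subadditivity gives $j\,h(\nu_k)\le H(\nu_k^{*j})\le (h(\nu_k)+\epsilon)j+C_\epsilon$ for all $j$; here $C_\epsilon$ absorbs the finitely many small $j$ via $H(\nu_k^{*j})\le jH(\nu_k)$. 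Since these bounds are linear in $j$ and $\mathbb{E}N_n=n/2$, the average follows directly. The outcome is $h=\tfrac12\big(h(\nu_k)+h(\lambda)\big)$, which gives the same strict gap.

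\textbf{Comparison.} The paper's construction is shorter and exact. Yours costs an extra averaging lemma, but its support is the union rather than the product of the factor supports. Also, the semigroup generated by its support is the full product of the factor semigroups, with no length-parity coupling between the coordinates. Neither point matters for the statement.

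\textbf{Other remarks.} Because your measures share a fixed finite support, you also get $H(\mu_k)\to H(\mu)$ for free. The paper records this explicitly, since it shows the hypotheses of a Theorem~1.1-type statement hold while the conclusion fails. Your verification of the input example matches what the paper imports from Example~\ref{example: discontinuity on wreath products over Dinfty and BS11} and \cite[Lemma 4]{Erschler2011}. There, a recurrent base walk with finite lamps gives $h(\nu_k)=0$, while drift on $\langle ab\rangle$ together with a non-trivial lamp measure gives $h(\nu)>0$.
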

\begin{proof}
	The idea behind this proof is that the asymptotic entropy of the product random walk decomposes as the sum of the asymptotic entropies on each factor.

	Let $L$ be a finitely generated group that admits probability measures $\mu,\{\mu_k\}_{k\ge 1}$ with $\supp{\mu_k}$ contained in a fixed finite subset of $L$ for all $k\ge 1$, and such that $\{\mu_k\}_{k\ge 1}$ converges pointwise to $\mu$, $\lim_{k\to \infty}H(\mu_k)=H(\mu)$, $h(\mu_k)=0$ for all $k\ge 1$, and $h(\mu)>0$.	For concreteness, one can choose $L$ to be one of the groups considered in Example \ref{example: discontinuity on wreath products over Dinfty and BS11}. Additionally, consider $F_2$ the free group of rank $2$, and let $\eta$ be the probability measure on $F_2$ that is uniform on a symmetric free generating subset of $F_2$.
	
	We consider the group $F_2\times L$, and define probability measures $\nu, \{\nu_k\}_{k\ge 1}$ on $F_2\times L$ by $\nu(x,g)=\eta(x)\mu(g)$ and $\nu_k(x,g)=\eta(x)\mu_k(g)$ for each $k\ge 1$ and $(x,g)\in F_2\times L$. Then, we have
	\begin{itemize}
		\item $\lim_{k\to \infty} \nu_k(x,g)=\eta(x)\mu_k(g)=\nu(x,g)$ for each $(x,g)\in F_2\times L$,
		\item  $H(\nu_k)=H(\eta)+H(\mu_k)<\infty$ for every $k\ge 1$, and
		\item $\lim_{k\to \infty}H(\nu_k)=H(\eta)+H(\mu)=H(\nu)$.
	\end{itemize} 
	
	Additionally, note that for each $n\ge 1$, every $(x,g)\in F_2\times L$ and all $k\ge 1$ we have $\nu^{*n}(x,g)= \eta^{*n}(x)\mu^{*n}(g)$ and $\nu_k^{*n}(x,g)=\eta^{*n}(x)\mu_k^{*n}(g)$. As a consequence, $H(\nu^{*n})=H(\eta^{*n})+H(\mu^{*n})$ and $H(\nu_k^{*n})=H(\eta^{*n})+H(\mu_k^{*n})$ for each $k\ge 1$. From this, we obtain that $\lim_{k\to \infty}h(\nu_k)= \lim_{k\to \infty}h(\eta)+h(\mu_k)=h(\eta)$, whereas $h(\nu)= h(\eta)+h(\mu)>h(\eta)$. Also, we have $h(\nu_k)\ge h(\eta)>0$ for all $k\ge 1$.
\end{proof}
This shows that it is possible to have discontinuity of asymptotic entropy through sequences of probability measures that have positive asymptotic entropy.

	\bibliographystyle{alpha}
	\bibliography{bibliography}
\end{document}